\theoremstyle{plain}
\newtheorem{Theorem}{Theorem}[section]
\newtheorem{Lemma}[Theorem]{Lemma}
\newtheorem{Corollary}[Theorem]{Corollary}
\theoremstyle{definition}
\newtheorem{Remark}[Theorem]{Remark}
\newtheorem{Question}[Theorem]{Question}
\newtheorem{ConQuestions}[Theorem]{Concluding Questions}
\newtheorem{Example}[Theorem]{Example}
\newcommand{\grass}{\mathfrak{Grass}^T_d}
\newcommand{\ledeg}{\le_{\mathrm{deg}}}
\newcommand{\edge}{\ar@{-}}
\newcommand{\dttdar}{\ar@{.>}}
\newcommand{\dashedge}{\ar@{--}}
\newcommand{\dshdar}{\ar@{-->}}
\newcommand{\loopNE}{\ar@'{@+{[0,0]+(6,2)} @+{[0,0]+(10,10)}
@+{[0,0]+(2,6)}}}
\newcommand{\loopNW}{\ar@'{@+{[0,0]+(-2,6)} @+{[0,0]+(-10,10)}
@+{[0,0]+(-6,2)}}}
\newcommand{\loopSW}{\ar@'{@+{[0,0]+(-6,-2)} @+{[0,0]+(-10,-10)}
@+{[0,0]+(-2,-6)}}}
\newcommand{\loopSE}{\ar@'{@+{[0,0]+(2,-6)} @+{[0,0]+(10,-10)}
@+{[0,0]+(6,-2)}}}
\newcommand{\loopNNE}{\ar@'{@+{[0,0]+(4,2)} @+{[0,0]+(6,11)}
@+{[0,0]+(0,6)}}}
\newcommand{\loopSSW}{\ar@'{@+{[0,0]+(-4,-2)} @+{[0,0]+(-6,-12)}
@+{[0,0]+(0,-6)}}}
\newcommand{\loopSSE}{\ar@'{@+{[0,0]+(0,-6)} @+{[0,0]+(6,-11)}
@+{[0,0]+(4,-2)}}}
\begin{document}

\title{Orbit closures and rational surfaces}

\author{Frauke M. Bleher}
\address{F.B.: Department of Mathematics\\University of Iowa\\
Iowa City, IA 52242-1419}
\email{frauke-bleher@uiowa.edu}
\thanks{The first author was partially supported by NSA Grant H98230-11-1-0131.}
\author{Ted Chinburg}
\address{T.C.: Department of Mathematics\\University of Pennsylvania \\ 
Philadelphia, PA 19104-6395}
\email{ted@math.upenn.edu}
\thanks{The second author was partially supported by NSF grant DMS 1100355.} 
\author{Birge Huisgen-Zimmermann}
\address{B.H.: Department of Mathematics\\University of California Santa Barbara\\
Santa Barbara, CA 93106}
\email{birge@math.ucsb.edu}
\thanks{The third author was partially supported by NSF grant DMS 0500961
and NSF grant DMS 0932078, while in residence at MSRI, Berkeley.}

\begin{abstract}In this paper we study the Grassmannian of submodules of a given
dimension inside a finitely generated projective module $P$ for a finite dimensional algebra
$\Lambda$ over an algebraically closed field.  The orbit of such a submodule $C$ under the
action of $\mathrm{Aut}_\Lambda (  P  )$ on the Grassmannian encodes information on the degenerations of $P/C$
and has been considered by a number of authors.  The goal of this article is to bound the geometry of two-dimensional orbit closures in terms of representation-theoretic data. Several examples are given to illustrate the interplay between the geometry of the projective surfaces which arise and the corresponding posets of degenerations. 
  \end{abstract}

\maketitle


\section{Introduction}
\label{sec:intro}
\setcounter{equation}{0}

Let $\Lambda$ be a basic finite dimensional algebra over an algebraically closed field $k$.
A fundamental problem in representation theory is the classification of finitely generated
$\Lambda$-modules.  An early result in this direction is the Jordan-H\"older
Theorem, which groups together modules that have the same composition factors
counting multiplicities.  This is a very coarse classification, however, serving only as a basis for the study of  
finer groupings into collections of ``similar" modules.  Typically one starts with a rough subdivision of a given Jordan-H\"older class in terms of numerical invariants by, for instance, asking that certain of the simple composition factors hold prescribed positions   (such as placement in the radical quotient of the considered modules).
This leads to the study of 
partial orders on isomorphism classes of finitely generated $\Lambda$-modules $M$ and $N$, with $M \le N$ signifying that
$N$ results from $M$ by some form of simplification.  
For example, if we say $M \le N$ if and only
if $N$ is the semi-simplification of $M$, then we recover the Jordan H\"older class associated
to $N$ by this partial order.  
There are different partial orders under consideration (see, for example, \cite{bong1,bong2}, and
also \cite[Sect. 3]{birgeLMS} for a short overview).    

In this paper, we will concentrate on the geometrically defined degeneration partial order.  
Interest in this partial order arose from work of  
Gabriel and Kac on the affine scheme $\mathbf{Mod}_d(\Lambda)$ that parameterizes
the left $\Lambda$-modules with fixed dimension $d$ \cite{gabriel1,gabriel2,kac1,kac2}. 
The reductive group $\mathrm{GL}_d$ acts on $\mathbf{Mod}_d(\Lambda)$ by conjugation, and the
orbits under this action are in one-to-one correspondence with the isomorphism classes of 
$d$-dimensional $\Lambda$-modules. Suppose a left $\Lambda$-module $M$
corresponds to a point $x$ in the scheme $\mathbf{Mod}_d(\Lambda)$. A degeneration of $M$
is any $\Lambda$-module $N$ corresponding to a point in the closure of
the orbit of $x$ under $\mathrm{GL}_d$.  By setting $M \ledeg N$ in this situation, we arrive at a partial order, which places increasingly simplified modules $N$ above $M$, the largest being the direct sum of the composition factors of $M$.   
This partial order was studied by Kraft, Riedtmann, Bongartz, Schofield,
Skowronski, Zwara and many others  (see for example \cite{kraft1,kraft2}, 
\cite{riedt,riedtscho}, \cite{bong1,bong2}, \cite{skowzwara,zwara1,zwara2}).  One of the highlights is a purely algebraic description of the degeneration order in terms of Riedtmann-Zwara exact sequences in \cite{zwara2}.

Let $J$ be the Jacobson radical of $\Lambda$,
and suppose $T$ is a finitely generated semisimple $\Lambda$-module.  In this paper we will
consider $\Lambda$-modules $M$ that have radical quotient $M/JM$ isomorphic to $T$.
There is a $\mathrm{GL}_d$-stable locally closed subscheme $\mathbf{Mod}^T_d$  of $\mathbf{Mod}_d(\Lambda)$ 
whose orbits are in bijection with the isomorphism classes of such modules $M$.

In \cite{bonghuisgen1,bonghuisgen2}, \cite{birgeTAMS,birgeLMS,birgehier}, Bongartz and
the third author
took an alternate geometric approach to studying degenerations $M \ledeg N$. When $M$ and $N$
have the same radical quotient $T$ this approach proceeds in the following way.  Let $P$ be a projective $\Lambda$-module with radical quotient
$T$.  Then $M \cong P/C$ for some $\Lambda$-submodule $C$ of $JP$.   We consider $C$ as a point in the projective scheme $\grass$ 
of all submodules of $JP$ that have codimension $d$ in $P$, and we let the group 
$\mathrm{Aut}_\Lambda(  P  )$ act canonically on $\grass$.  As above, orbits correspond bijectively to isomorphism classes of modules $N$ with radical quotient $T$. The relation $M \ledeg N$  is equivalent to the existence of a point $C'$ in the orbit closure
$\overline{\mathrm{Aut}_\Lambda(  P  ) . C}$ such that $N \cong P/C'$.  A significant advantage of this
approach is that orbit closures become closed subsets of projective varieties rather than of affine ones.

Our primary goal in this paper is to bound the global geometry of $\overline{\mathrm{Aut}_\Lambda(  P  ) . C}$ in terms of the
representation theoretic data which specifies $M = P/C$.  
In particular, we will study the following problem:
\begin{Question}
\label{qu:Euler}
Is the Euler characteristic of $\overline{\mathrm{Aut}_\Lambda(  P  ) . C}$ bounded
by a function of the dimension of $\mathrm{Aut}_\Lambda(  P  ) . C$, the field $k$ 
and the dimension of $C$ over $k$?
\end{Question}

We answer Question \ref{qu:Euler} in the affirmative for simple $T$ and orbits of dimension $2$.  In fact, using the classical theory of rational surfaces, we prove the following (see also Theorem \ref{thm:main}):
\begin{Theorem}

\label{thm:mainintro}
Suppose $T$ is simple and $\mathrm{Aut}_\Lambda(  P  ) . C$
has dimension $2$. Then there exist positive integers $b$ and $c$ depending only on $k$ and $\mathrm{dim}_k( C )$, together with a collection of $b$ relatively minimal smooth rational projective surfaces, such that
the minimal desingularization of $\overline{\mathrm{Aut}_\Lambda(  P  ) . C}$ 
can be obtained from one of the surfaces in this collection by 
performing at most $c$ monoidal transformations.   
\end{Theorem}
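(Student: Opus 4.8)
Put $G=\mathrm{Aut}_\Lambda(P)$ and $A=\mathrm{End}_\Lambda(P)$, so that $A$ acts on $P$ on the right and $G=A^\times$. Since $T$ is simple, $P$ is indecomposable, hence $A$ is a local finite-dimensional $k$-algebra with $A/\mathrm{rad}(A)\cong k$; consequently $G$ is connected and $G=k^\times\times U$, where the central torus $k^\times$ acts on $P$ by scalars and $U=1+\mathrm{rad}(A)$ is a split unipotent group. Thus $X:=\overline{G.C}$ is an irreducible projective surface with a dense $G$-orbit, and that orbit equals $U.C$ since $k^\times$ fixes every subspace of $P$. The crucial step --- and, I expect, the hardest --- is a reduction to a \emph{bounded} situation: although $\dim_k\Lambda$, $\dim_kP$ and $\dim G$ are not controlled by $\dim_k(C)$, two-dimensionality of the orbit confines $X$ to a projective space of dimension bounded in terms of $\dim_k(C)$ alone. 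Indeed $\mathrm{Stab}_G(C)=(A_C)^\times$ for the subalgebra $A_C=\{a\in A:Ca\subseteq C\}$, so $\dim_kA-\dim_kA_C=\dim(G.C)=2$; choosing $a_1,a_2$ with $A=A_C+ka_1+ka_2$ gives $CA=C+Ca_1+Ca_2$, hence the submodule $V:=CA\subseteq JP$ satisfies $\dim_kV\le 3\dim_k(C)$. As every translate $g.C$ lies in $V$ and $C'\subseteq V$ is a closed condition on $\grass$, one obtains a closed embedding $X\hookrightarrow\mathrm{Gr}(\dim_kC,V)\hookrightarrow\mathbb{P}\bigl(\textstyle\bigwedge^{\dim_kC}V\bigr)=:\mathbb{P}^N$ with $N+1\le\binom{3\dim_kC}{\dim_kC}$; here $G$ acts linearly and $k^\times$ acts trivially, so $X=\overline{U.[C]}$ with $U$ acting linearly and unipotently on $\mathbb{P}^N$.

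I would then bound $\deg X$ in $\mathbb{P}^N$. Replacing $U$ by its image $\bar U$ in $\mathrm{GL}\bigl(\bigwedge^{\dim_kC}V\bigr)$ --- still a connected unipotent group, of dimension $\le\binom{N+1}{2}$ --- we have $X=\overline{\bar U.[w]}$ for a Pl\"ucker vector $w$ of $C$, and $\bar U$ admits a polynomial parametrisation by an affine space whose degree is bounded in terms of $N$ (and, in positive characteristic, in terms of $\mathrm{char}(k)$, since the exponential parametrisation of $\bar U$ must be modified in small characteristic). Because $g\mapsto g.w$ is linear in $g$, it follows that $\deg X$ is bounded in terms of $\dim_k(C)$ and $k$. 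Hence $X$ lies in a bounded family --- it is a fibre of the universal family over a finite-type $k$-scheme parametrising surfaces in $\mathbb{P}^N$ of the relevant bounded degree --- and constructibility of $\ell$-adic Betti numbers in proper families bounds $b_i(X)$ for all $i$. Since a surface of bounded degree has singularities of bounded complexity, the minimal desingularisation $\pi\colon\widetilde X\to X$ (which exists and is unique) is obtained from $X$ by a bounded process, so $b_2(\widetilde X)$ is bounded in terms of $\dim_k(C)$ and $k$.

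The surface $X$ is rational: $\mathrm{Stab}_U(C)=1+\bigl(\mathrm{rad}(A)\cap A_C\bigr)$ is the unit group of a finite-dimensional algebra, hence smooth, so the orbit map $U\to U.C$ is separable and $U.C\cong U/\mathrm{Stab}_U(C)$; being a homogeneous space under a split unipotent group with smooth stabiliser, this is isomorphic to an affine space, so $X$ contains a dense open $\mathbb{A}^2$. Therefore $\widetilde X$ is a smooth projective rational surface, for which $b_2(\widetilde X)=\rho(\widetilde X)$ (rational surfaces have no transcendental second cohomology), and by the classification of rational surfaces --- valid in every characteristic --- $\widetilde X$ is obtained from a relatively minimal rational surface $X_{\min}\in\{\mathbb{P}^2\}\cup\{\mathbb{F}_n:n=0\text{ or }n\ge 2\}$ by exactly $\rho(\widetilde X)-\rho(X_{\min})\le\rho(\widetilde X)-1$ monoidal transformations. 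Taking $c:=\rho(\widetilde X)-1$, which is bounded in terms of $\dim_k(C)$ and $k$, settles the number of blow-ups.

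It remains to see that the integer $n$ of a Hirzebruch minimal model is bounded. This again follows from boundedness of the family: the isomorphism type of the minimal model of $\widetilde X$ and the number of monoidal transformations $\widetilde X\to X_{\min}$ are constructible on the finite-type parameter space above (after stratifying so that a simultaneous resolution exists), hence take finitely many values; alternatively, the strict transform on $\widetilde X$ of the $(-n)$-section of $\mathbb{F}_n$ has self-intersection $\le -n$ yet is bounded below, because the polarisation pulled back from $\mathbb{P}^N$ has bounded degree and $\pi$ has a bounded exceptional configuration. Either way one obtains the finite bound $b$ on the number of relatively minimal rational surfaces required. In sum, once $X$ has been forced into a bounded projective space the theorem becomes an exercise in the classical theory of surfaces; the entire weight of the proof --- including its only characteristic-dependent estimates --- rests on the initial observation that two-dimensionality of the orbit forces $\dim_k(CA)\le 3\dim_k(C)$.
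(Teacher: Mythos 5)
Your opening reduction is correct, and it is genuinely different from the paper's route: from $\dim_k\mathrm{End}_\Lambda(P)-\dim_k A_C=2$ you do get $CA\subseteq C+Ca_1+Ca_2$, hence $\overline{\mathrm{Aut}_\Lambda(P).C}$ sits inside the Grassmannian of $\dim_k(C)$-planes of a space $V=CA$ with $\dim_kV\le 3\dim_k(C)$, so the ambient projective space is bounded in terms of $\dim_k(C)$ alone. The paper never makes this observation (it explicitly notes that the dimension $h$ of its ambient space is \emph{not} bounded) and instead bounds the local data of the rational map $\mathbb{P}^1_k\times\mathbb{P}^1_k\dashrightarrow\overline{\mathrm{Aut}_\Lambda(P).C}$ at its boundedly many fundamental points. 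Your degree bound is also fine, and the characteristic-$p$ worry about exponentials is avoidable: the image of $\mathrm{Aut}_\Lambda(P)$ in $\mathrm{GL}(V)$ is $k^\times(1+\bar R)$ for a linear space $\bar R$ of nilpotent endomorphisms, so the orbit is parametrized by $\bar R$ through polynomials of degree at most $\dim_k(C)$ in the Pl\"ucker coordinates. Two later steps, however, are asserted rather than proved, and one of them is a genuine gap.

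The lesser issue is the assertion that a surface of bounded degree has ``singularities of bounded complexity,'' so that $b_2(\widetilde X)$ is bounded: this uniformity over the bounded family is exactly what the paper's Steps 4--5 (Zariski's theorem on dominating modifications, spreading out from the generic point, Noetherian induction) are there to establish; it is provable by such a stratification argument but is not a quotable one-liner, so it needs at least a sketch. The serious gap is the bound on the Hirzebruch index, i.e.\ on your integer $b$: bounding $b_2(\widetilde X)$ does not restrict which $X_n$ can occur, since every $X_n$ has $b_2=2$, and neither of your two arguments closes this. Constructibility of ``the isomorphism type of the relatively minimal model'' on the parameter space is not a standard fact; unwound, it amounts to a uniform lower bound on the self-intersection of the strict transform of the negative section across the family, which is essentially the statement to be proven (negative curves and minimal models do not vary in an obviously constructible way). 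And the Hodge-index/polarization argument points the wrong way: a nef pullback $H$ of $\mathcal{O}_X(1)$ of bounded degree gives, via $(C\cdot H)^2\ge C^2H^2$, an \emph{upper} bound on $C^2$ when $C\cdot H$ is controlled, never a lower bound, and the curve in question typically lies over the boundary or the exceptional locus, where $C\cdot H=0$, so ``bounded polarisation degree plus bounded exceptional configuration'' does not bound its self-intersection from below. The paper bounds $n$ by a different mechanism (Step 3): it has in hand a surface $S$ obtained from $\mathbb{P}^1_k\times\mathbb{P}^1_k$ by a bounded number of monoidal transformations together with a birational morphism $S\to X$, and it plays the strict transform of the $(-n)$-section off against curves on $\mathbb{P}^1_k\times\mathbb{P}^1_k$. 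Your setup never produces such a bounded comparison morphism, and without some replacement (for instance, exploiting the lifted $\mathrm{Aut}_\Lambda(P)$-action to show that every irreducible curve of negative self-intersection on $\widetilde X$ is a component of the boundary divisor $\widetilde X\setminus\mathbb{A}^2_k$, and then genuinely bounding those finitely many self-intersections) the final bound on $b$ is missing.
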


\begin{Corollary}
\label{cor:Euler}
Under the hypotheses of Theorem $\ref{thm:mainintro}$, the Euler characteristic of
$\overline{\mathrm{Aut}_\Lambda(  P  ) . C}$ is bounded from above by a function of $k$
and $\mathrm{dim}_k( C )$.
\end{Corollary}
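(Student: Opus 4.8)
The plan is to deduce the Euler characteristic bound directly from the structural statement of Theorem \ref{thm:mainintro}, using standard facts about how the Euler characteristic behaves under the birational operations appearing there. We work throughout with the $\ell$-adic Euler characteristic (in positive characteristic) or the topological Euler characteristic (over $\mathbb{C}$); in either case the two invariants needed are (i) blow-up of a smooth projective surface at a point raises the Euler characteristic by exactly $1$, and (ii) the minimal desingularization $\pi\colon \widetilde{X}\to X$ of a (projective) surface $X$ satisfies $\chi(\widetilde{X})\ge \chi(X)$, since $\pi$ is an isomorphism away from finitely many points and the fibers over the singular points are connected, so each such fiber contributes at least $1$ to $\chi(\widetilde{X})$ while contributing $1$ to $\chi(X)$. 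Granting these, the argument is short.

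First I would invoke Theorem \ref{thm:mainintro} to obtain the integers $b,c$ and the finite list $S_1,\dots,S_b$ of relatively minimal smooth rational projective surfaces, depending only on $k$ and $\dim_k(C)$, such that the minimal desingularization $\widetilde{Y}$ of $Y:=\overline{\mathrm{Aut}_\Lambda(P).C}$ is obtained from some $S_i$ by at most $c$ monoidal transformations (blow-ups at closed points). By fact (i), iterated, we get
\[
\chi(\widetilde{Y}) \;\le\; \max_{1\le i\le b}\chi(S_i) \;+\; c.
\]
Since a relatively minimal smooth rational projective surface is either $\mathbb{P}^2$ or a Hirzebruch surface $\mathbb{F}_n$, its Euler characteristic is $3$ or $4$; in any case the finite collection gives a bound $\max_i \chi(S_i)\le N(k,\dim_k C)$ depending only on the stated data. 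Hence $\chi(\widetilde{Y})\le N(k,\dim_k C)+c$, a bound of the required form.

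Finally I would pass from $\widetilde{Y}$ back to $Y$ using fact (ii): $\chi(Y)\le \chi(\widetilde{Y})$, because the minimal desingularization is an isomorphism over the smooth locus of $Y$ and each exceptional fiber over a singular point is a connected projective curve, hence has Euler characteristic at least $1$, equal to the contribution of the point it replaces. Therefore
\[
\chi\!\left(\overline{\mathrm{Aut}_\Lambda(P).C}\right) \;\le\; N(k,\dim_k C) + c,
\]
which is a function of $k$ and $\dim_k(C)$ alone, as claimed.

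The only genuinely delicate point is fact (ii): one must be sure the exceptional locus of the minimal resolution is \emph{connected} over each singular point and contributes non-negatively to the change in Euler characteristic. For surfaces this is classical — the fiber of a resolution over an isolated singularity is connected by Zariski's main theorem, and a connected projective curve over an algebraically closed field has Euler characteristic $\ge 1$ — but it is worth recording explicitly, since $Y$ itself is only known to be an irreducible projective surface (the orbit closure), not a priori normal. If normality of $Y$ is not available, one should first note that the Euler characteristic is unchanged under normalization in the relevant sense, or work with the minimal resolution of the normalization; either way the inequality $\chi(Y)\le\chi(\widetilde{Y})$ persists, and the bound follows.
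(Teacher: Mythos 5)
Your reduction of the bound for $\chi(\overline{\mathrm{Aut}_\Lambda(P).C}^\dagger)$ to Theorem \ref{thm:mainintro} is fine (each monoidal transformation raises $\chi$ by $1$, and the relatively minimal rational surfaces have $\chi\in\{3,4\}$), but the passage from the minimal desingularization back down to $\overline{\mathrm{Aut}_\Lambda(P).C}$ rests on a false general fact. Your ``fact (ii)'' asserts that a connected projective curve over an algebraically closed field has Euler characteristic at least $1$, so that a resolution can only increase $\chi$. This is not true: a smooth elliptic curve has $\chi=0$, a curve of genus $g\ge 2$ has $\chi=2-2g<0$, and exceptional fibers of minimal resolutions of normal surface singularities can be exactly such configurations (simple elliptic and cusp singularities, cones over positive genus curves). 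These occur on \emph{rational} surfaces: blow up $\mathbb{P}^2_k$ at ten general points of a smooth cubic and contract the proper transform $E$ of the cubic (an elliptic curve with $E\cdot E=-1$); the resulting normal projective rational surface $Y$ satisfies $\chi(Y)=\chi(\widetilde Y)-\chi(E)+1=\chi(\widetilde Y)+1>\chi(\widetilde Y)$. So the inequality $\chi(Y)\le\chi(\widetilde Y)$ simply fails in general, and nothing in Theorem \ref{thm:mainintro} alone rules out such singularities on the orbit closure. The auxiliary claim that ``the Euler characteristic is unchanged under normalization in the relevant sense'' is likewise false in general (already for a nodal curve, and for surfaces glued along curves), so the non-normality of $\overline{\mathrm{Aut}_\Lambda(P).C}$ cannot be waved away either.

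What saves the statement is specific structure of the orbit closure that your argument never invokes: the locus over which the resolution fails to be an isomorphism is contained in the boundary $\overline{\mathrm{Aut}_\Lambda(P).C}-\mathrm{Aut}_\Lambda(P).C$, and this boundary is the image, under the morphism $\nu:S\to\overline{\mathrm{Aut}_\Lambda(P).C}$ produced in Step 3 of the proof of Theorem \ref{thm:main}, of the tree $\mu^{-1}(\Delta)$ of at most $n+2$ smooth rational curves lying over $\Delta=(\mathbb{P}^1_k\times\mathbb{P}^1_k)-\mathbb{A}^2_k$, with $n$ bounded. This is precisely why the paper does not argue through $\overline{\mathrm{Aut}_\Lambda(P).C}^\dagger$ at all: it decomposes the orbit closure as the disjoint union of the open orbit $\mathbb{A}^2_k$ (contributing $1$ to the compactly supported Euler characteristic) and the closed boundary $\nu(\mu^{-1}(\Delta))$, which is a connected union of $t\le n+2$ rational curves, and then bounds $\chi(\nu(\mu^{-1}(\Delta)))\le 2t$ by comparing with its normalization (a disjoint union of $t$ copies of $\mathbb{P}^1_k$) via the exact sequence $0\to\mathbb{Q}_\ell\to\sigma_*\mathbb{Q}_\ell\to\mathcal{T}\to 0$. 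To repair your route you would have to prove that every exceptional fiber of $\overline{\mathrm{Aut}_\Lambda(P).C}^\dagger\to\overline{\mathrm{Aut}_\Lambda(P).C}$ is a connected union of rational curves of bounded number (e.g.\ by showing it is dominated by a subconfiguration of $\mu^{-1}(\Delta)$), at which point you are essentially reconstructing the paper's direct boundary computation, so the appeal to Theorem \ref{thm:mainintro} buys nothing.
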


Without
assuming that the orbit $\mathrm{Aut}_\Lambda( P ) . C $ is two-dimensional, we will
in fact bound, in a similar way, the geometry of the closure of any two-dimensional affine plane  
contained in  $\mathrm{Aut}_\Lambda(  P  ) . C$ (see Remark \ref{rem:moregeneral}).
We should note that  if $T$ is simple, the orbit $\mathrm{Aut}_\Lambda(  P  ) . C$ is an affine space (see \cite[Prop. 2.9]{birgeTAMS} and \cite[Lem. 4.1]{birgeLMS}). 

In the final section, we give numerous examples in which we explicitly link the geometry of $\overline{\mathrm{Aut}_\Lambda (  P  ).C}$ to the top-stable degenerations of $P/C$. 

We now give a brief overview of existing work on the geometry of orbit closures.

 In \cite{bongsing}, Bongartz analyzed the singularities at minimal degenerations for modules over 
representation-finite hereditary algebras.
In \cite{bendbong}, Bender and Bongartz considered the case when $\Lambda$ is
the Kronecker algebra. 
They
classified all minimal singularities up to smooth equivalence and showed 
that they are isolated Cohen-Macaulay.

In \cite{zwarasmooth}, Zwara proved that if $X$ is an orbit closure in $\mathbf{Mod}_d(\Lambda)$ and $y\in X$ is an element 
such that the orbit of $y$ has codimension 1 in $X$, then $X$ is smooth at $y$.
In \cite{zwarasmooth2}, he moreover showed that if $\Lambda$ is the path algebra of a Dynkin quiver, then
the orbit closure $X$ is regular in codimension two.

\section{Preliminaries: conventions and basic results}
\label{sec:prelim}
\setcounter{equation}{0}

In this section, we set up our notation and summarize what we need regarding the classification of relatively minimal smooth rational projective surfaces.
Let $k$ be an algebraically closed field of arbitrary characteristic, and let 
$\Lambda$ be a basic finite dimensional $k$-algebra with Jacobson radical $J$.
Without loss of generality, we assume that $\Lambda=kQ/I$ for some finite quiver $Q$ and some admissible
ideal $I$ of the path algebra $kQ$. The quiver $Q$ provides us with a distinguished set of primitive
idempotents $e_1,\ldots,e_n$ of $\Lambda$, which are in bijective correspondence with the vertices
of $Q$. As is well-known, the quotient modules $\Lambda e_i/ J e_i$, $1\le i\le n$, form a complete set
of representatives for the isomorphism classes of simple $\Lambda$-modules. 

We fix a simple $\Lambda$-module $T$, corresponding to a primitive idempotent $e=e_{i_0}$ for some $i_0$,
together with its projective cover $P=\Lambda e$.
Let $d$ be a positive integer with $1 < d < \mathrm{dim}_k(JP)$, and
let $d'=\mathrm{dim}_k(P)-d$. Denote the classical Grassmannian of $d'$-dimensional subspaces of the
$k$-vector space $JP$ by $\mathcal{G}r(d',JP)$. We define 
$$\grass = \{ C\in \mathcal{G}r(d',JP)\;|\; C\mbox{ is a $\Lambda$-submodule of }JP\},$$ 
which is a closed subscheme of $\mathcal{G}r(d',JP)$.
We have an obvious surjection $\phi$ from $\grass$ to the set of isomorphism classes of $d$-dimensional 
$\Lambda$-modules with radical quotient $T$, where $\phi(C)=[P/C]$. The fibers of $\phi$ coincide
with the orbits of the natural action of $\mathrm{Aut}_\Lambda(P)$ on $\grass$.

Suppose $C, C' \in \grass$. As we mentioned in the introduction, the partial order 
$M \ledeg N$ on $\Lambda$-modules of the form $M = P/C$
and $N = P/C'$, defined by requiring that $C'$ be in the closure of the $\mathrm{Aut}_\Lambda(P)$-orbit of $C$, coincides with the degeneration order based on the $\mathrm{GL}_d$-action on $\mathbf{Mod}^T_d$.  
In fact, in \cite[Prop. C]{bonghuisgen2}, it is shown that there is an inclusion-preserving
bijection between the $\mathrm{Aut}_{\Lambda}(P)$-stable subsets of $\grass$ and
the $\mathrm{GL}_d$-stable subsets of $\mathbf{Mod}^T_d$ which preserves and reflects openness, closures,
connectedness, irreducibility, and types of singularities.  Hence geometric results  concerning orbit closures in $\grass$ can, to a large extent, be carried over to orbit closures in $\mathbf{Mod}^T_d$, and vice versa.

Since we assume $T$ to be simple, \cite[Prop. 2.9]{birgeTAMS} shows the orbit
$\mathrm{Aut}_\Lambda(P).C$ to be isomorphic to a full affine space $\mathbb{A}^{\frak{m}}_k$, where
$$\frak{m}=\mathrm{dim}_k\,\mathrm{Hom}_\Lambda(P,JP/C)-\mathrm{dim}_k\,\mathrm{Hom}_\Lambda(
P/C,JP/C).$$
More precisely,
let $\{\omega_1,\ldots,\omega_\mu\}$ be a $k$-basis of  $eJe$ consisting of oriented cycles from $e$ to $e$, and
$\mathrm{Stab}_{eJe}(C)$ the $k$-vector space consisting of the elements $a\in eJe$ for which 
$Ca\subseteq C$.  Suppose $\{\omega_1,\ldots,\omega_\frak{m}\}$ is a $k$-basis of $eJe$ modulo
$\mathrm{Stab}_{eJe}(C)$. The following is shown in Lemma 4.1 of \cite{birgeLMS}:

\begin{Lemma}
\label{lem:birgeaffine}
Let $\overline{\mathrm{Aut}_\Lambda(P) . C}$ be the closure of $\mathrm{Aut}_\Lambda(P) . C$
in $\grass$ with the induced reduced structure.  There is a morphism 
$$\Psi:\mathbb{A}^{\frak{m}}_k \to \overline{\mathrm{Aut}_\Lambda(P) . C}$$ 
which is an isomorphism from $\mathbb{A}^{\frak{m}}_k$ to the dense open subset $\mathrm{Aut}_\Lambda(P) . C$ of
 the target.  It sends the point with coordinates $(t_1,\ldots,t_{\frak{m}})$ in  
$\mathbb{A}^{\frak{m}}_k$ to the element $C\cdot (e + t_1 \omega_1 + \cdots + t_{\frak{m}} \omega_{\frak{m}})$ of  
$\mathrm{Aut}_\Lambda(P) . C$.
\end{Lemma}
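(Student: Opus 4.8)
The plan is to write $\Psi$ down explicitly from the ring-theoretic description of $\mathrm{Aut}_\Lambda(P)$, check by hand that it is a morphism landing in the orbit, and then show that it is a bijective, generically \'etale map onto the orbit, so that Zariski's Main Theorem forces it to be an isomorphism onto that orbit.

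\emph{The group and its action.} Evaluation at $e$ identifies $\mathrm{End}_\Lambda(P)=\mathrm{End}_\Lambda(\Lambda e)$ with $(e\Lambda e)^{\mathrm{op}}$, so that $\mathrm{Aut}_\Lambda(P)\cong(e\Lambda e)^{\times}$, acting on $C\in\grass$ by right multiplication $C\mapsto Ca$. The algebra $e\Lambda e$ is local with maximal ideal $eJe$, and since $\Lambda$ is basic $e\Lambda e=ke\oplus eJe$; hence $(e\Lambda e)^{\times}=k^{\times}e\cdot U$ with $U=e+eJe$ a connected unipotent group. As the scalars $k^{\times}e$ act trivially on $\grass$, the orbit equals $U.C=\{\,C(e+w)\mid w\in eJe\,\}$. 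Moreover $ke\oplus\mathrm{Stab}_{eJe}(C)$ is a subalgebra of $e\Lambda e$ (if $Ca\subseteq C$ and $Cb\subseteq C$ then $C(ab)\subseteq C$), so the stabilizer of $C$ in $U$ is the closed connected subgroup $U_C=e+\mathrm{Stab}_{eJe}(C)$; consequently $V:=\mathrm{span}_k(\omega_1,\dots,\omega_{\mathfrak m})$ is a complement of $\mathrm{Stab}_{eJe}(C)$ in $eJe$ and $\mathfrak m=\dim_k(U/U_C)=\dim\,\mathrm{Aut}_\Lambda(P).C$.

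\emph{$\Psi$ is a morphism, and a bijection onto the orbit.} Fixing a $k$-basis $c_1,\dots,c_{d'}$ of $C$, the subspace $C(e+\sum_{i=1}^{\mathfrak m} t_i\omega_i)$ is spanned by the vectors $c_j+\sum_i t_i(c_j\omega_i)\in JP$, which are linear in $t=(t_1,\dots,t_{\mathfrak m})$, so its Pl\"ucker coordinates are polynomial in $t$; thus $\Psi$ is a morphism with $\Psi(0)=C$ and image inside $\mathrm{Aut}_\Lambda(P).C\subseteq\overline{\mathrm{Aut}_\Lambda(P).C}$. Since $C(e+w)=C(e+w')$ if and only if $(e+w)(e+w')^{-1}\in U_C$, one checks that $\Psi$ is a bijection onto $\mathrm{Aut}_\Lambda(P).C$ exactly when the multiplication map $U_C\times(e+V)\to U$, $(u_1,u_2)\mapsto u_1u_2$, is bijective, i.e. when every $w\in eJe$ factors uniquely as $e+w=(e+s)(e+v)$ with $s\in\mathrm{Stab}_{eJe}(C)$ and $v\in V$. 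Existence is obtained by solving $v+s+sv=w$ for $(v,s)$ through successive approximation, each correction of the current guess lying in a strictly higher power of the nilpotent ideal $eJe$; uniqueness follows from the same estimate applied to the difference of two solutions. This is where the unipotence of $U$ -- equivalently the nilpotence of $eJe$ -- is essential, and it is the main obstacle: a vector-space complement $V$ of $\mathrm{Stab}_{eJe}(C)$ need not respect the radical filtration of $eJe$, so the iteration has to be organized with care. This is carried out in the proof of \cite[Prop.~2.9]{birgeTAMS}, which also shows $\mathrm{Aut}_\Lambda(P).C\cong\mathbb A^{\mathfrak m}_k$; in particular $\mathrm{Aut}_\Lambda(P).C$ is smooth of dimension $\mathfrak m$, hence normal.

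\emph{$\Psi$ is an isomorphism.} The differential at $e$ of the orbit map $U\to U.C$ is the $k$-linear map $eJe\to\mathrm{Hom}_\Lambda(C,JP/C)=T_C(\grass)$, $w\mapsto(c\mapsto cw+C)$, whose kernel is $\mathrm{Stab}_{eJe}(C)$; its restriction to the complement $V$ is therefore injective, so $d\Psi_0\colon k^{\mathfrak m}\to T_C\bigl(\mathrm{Aut}_\Lambda(P).C\bigr)$ is injective, and as both sides have dimension $\mathfrak m$ it is an isomorphism. Thus $\Psi$ is \'etale at $0$, in particular separable; being also bijective onto $\mathrm{Aut}_\Lambda(P).C$ it is birational onto it, and since $\mathrm{Aut}_\Lambda(P).C$ is normal, Zariski's Main Theorem forces $\Psi$ to be an isomorphism $\mathbb A^{\mathfrak m}_k\xrightarrow{\ \sim\ }\mathrm{Aut}_\Lambda(P).C$. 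Finally, $\mathrm{Aut}_\Lambda(P).C$ is open and dense in $\overline{\mathrm{Aut}_\Lambda(P).C}$ because orbits of algebraic groups are locally closed, which yields $\Psi$ with the stated properties.
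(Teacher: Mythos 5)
The paper itself gives no argument for this lemma: it is quoted from \cite[Lem.~4.1]{birgeLMS}, with \cite[Prop.~2.9]{birgeTAMS} supplying the fact that the orbit is a full affine space, so the only question is whether your write-up stands on its own. Its architecture is sound: the identification $\mathrm{Aut}_\Lambda(P)\cong(e\Lambda e)^\times$, the reduction to the unipotent group $U=e+eJe$ with stabilizer $U_C=e+\mathrm{Stab}_{eJe}(C)$, the correct observation that bijectivity of $\Psi$ onto the orbit is equivalent to the unique factorization $e+w=(e+s)(e+v)$ with $s\in\mathrm{Stab}_{eJe}(C)$ and $v\in V=\mathrm{span}_k(\omega_1,\dots,\omega_{\mathfrak m})$, and the concluding step ($d\Psi_0$ bijective, hence $\Psi$ \'etale at $0$; bijective plus birational onto the smooth, hence normal, orbit; Zariski's Main Theorem) are all fine.

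The genuine gap is exactly the point you flag and then wave off. The successive-approximation argument you sketch (``each correction of the current guess lying in a strictly higher power of the nilpotent ideal $eJe$'') fails as stated, because the projections of $eJe$ onto $\mathrm{Stab}_{eJe}(C)$ and $V$ need not respect the filtration by powers of $eJe$; and the factorization is genuinely \emph{not} a formal consequence of $V$ being a vector-space complement of a subalgebra of a nilpotent algebra. For instance, let $N$ have basis $x,y,xy$ with $xy$ the only nonzero product of basis vectors, take the subalgebra $S=k(x+xy)$ and the complement $V=kx\oplus ky$; then for $s=a(x+xy)$, $v=px+qy$ one computes $s+v+sv=(a+p)x+qy+a(1+q)xy$, so elements $w=Ax-y+Cxy$ with $C\neq0$ admit no factorization $(e+s)(e+v)=e+w$, while for $C=0$ the factorization is not unique. (Such an $S$ happens not to occur as a stabilizer in this particular $N$ -- every stabilizer there contains $xy$ -- which is precisely the point: the argument must use the module-theoretic structure of $\mathrm{Stab}_{eJe}(C)$, and/or the specific choice of the $\omega_i$ from a basis of oriented cycles, not just nilpotence of $eJe$.) Since you dispose of this step only by citing the proof of \cite[Prop.~2.9]{birgeTAMS}, your proposal in the end establishes the lemma by appealing to essentially the same source the paper quotes it from; the \'etale-plus-ZMT superstructure you add is correct but redundant once that citation is invoked, because the cited result already asserts the explicit isomorphism. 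To make the proof self-contained you would have to actually carry out the ``carefully organized'' iteration, which is the entire content of the lemma.
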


Orbit closures in both the affine and projective module schemes are always unirational, since both the general linear group and the automorphism group $\mathrm{Aut}_\Lambda(P)$ are connected rational, and the orbits are epimorphic images of these groups, respectively.  There is no example where rationality has been found to fail, and in our present situation, rationality is actually guaranteed, as it is in the more general case when $T$ has no simple summands of multiplicity $>1$ (see \cite[Theorem 5.1]{birgeLMS}).

\begin{Corollary}
\label{cor:birgeaffine}
The orbit closure  $\overline{\mathrm{Aut}_\Lambda(P) . C}$
in $\grass$ is a rational variety of dimension $\frak{m}$.
\end{Corollary}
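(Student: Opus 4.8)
The plan is to deduce this directly from Lemma~\ref{lem:birgeaffine}, so the argument is short. First I would record that $\overline{\mathrm{Aut}_\Lambda(P).C}$ is irreducible: by Lemma~\ref{lem:birgeaffine} it is the closure of the image of the morphism $\Psi\colon \mathbb{A}^{\mathfrak{m}}_k \to \grass$, and the (topological) image of an irreducible scheme under a morphism is irreducible, hence so is its closure. Equivalently, the orbit $\mathrm{Aut}_\Lambda(P).C$ is the image of the connected group $\mathrm{Aut}_\Lambda(P)$ under the orbit map at $C$, so it, and therefore its closure, is irreducible. With the induced reduced structure, $\overline{\mathrm{Aut}_\Lambda(P).C}$ is thus an irreducible variety, and "rational variety of dimension $\mathfrak{m}$" is unambiguous.

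Next I would compute the dimension. By Lemma~\ref{lem:birgeaffine}, $\Psi$ restricts to an isomorphism from $\mathbb{A}^{\mathfrak{m}}_k$ onto the dense open subset $\mathrm{Aut}_\Lambda(P).C$ of the target. Since a dense subset of an irreducible variety has the same dimension as the ambient variety, we get $\dim \overline{\mathrm{Aut}_\Lambda(P).C} = \dim \mathrm{Aut}_\Lambda(P).C = \dim \mathbb{A}^{\mathfrak{m}}_k = \mathfrak{m}$.

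For rationality, I would simply observe that $\mathrm{Aut}_\Lambda(P).C \cong \mathbb{A}^{\mathfrak{m}}_k$ is a nonempty open subset of the irreducible variety $\overline{\mathrm{Aut}_\Lambda(P).C}$, so the two have the same function field, namely the purely transcendental extension $k(t_1,\dots,t_{\mathfrak{m}})$. Hence $\overline{\mathrm{Aut}_\Lambda(P).C}$ is rational.

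I do not expect a genuine obstacle here; the only point deserving a word of care is that the orbit closure is taken with its reduced structure, which is exactly the hypothesis of Lemma~\ref{lem:birgeaffine}, so birational invariance of the function field applies without fuss. (The cited \cite[Theorem~5.1]{birgeLMS} yields rationality in the broader setting where $T$ has no simple summand of multiplicity $>1$; in the present case of simple $T$, Lemma~\ref{lem:birgeaffine} already makes the birational equivalence with affine space completely explicit.)
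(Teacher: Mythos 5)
Your argument is correct and is essentially the paper's: Corollary \ref{cor:birgeaffine} is stated without separate proof precisely because Lemma \ref{lem:birgeaffine} exhibits the orbit as a copy of $\mathbb{A}^{\mathfrak{m}}_k$ sitting as a dense open subset of the (reduced, irreducible) orbit closure, so the closure is birational to $\mathbb{A}^{\mathfrak{m}}_k$ and hence rational of dimension $\mathfrak{m}$. Your remark about the citation to \cite[Theorem 5.1]{birgeLMS} matches the paper's own framing of the more general multiplicity-free case.
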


Unless specified otherwise, we now assume that $\frak{m} =2$.   Then $\mathrm{Aut}_\Lambda(P) . C$ is isomorphic to $\mathbb{A}^2_k$, and hence
$\overline{\mathrm{Aut}_\Lambda(P) . C}$ is a rational projective surface. 
We use the following notation:
\begin{itemize}
\item $\overline{\mathrm{Aut}_\Lambda(P) . C}^\#$ denotes the normalization of $\overline{\mathrm{Aut}_\Lambda(P) . C}$, and
\item $\overline{\mathrm{Aut}_\Lambda(P) . C}^\dagger$ denotes the 
minimal desingularization of $\overline{\mathrm{Aut}_\Lambda(P) . C}^\#$.
\end{itemize}
In general, the smooth surface $\overline{\mathrm{Aut}_\Lambda(P) . C}^\dagger$  
over $k$ fails to be relatively minimal.
Recall that a smooth irreducible projective surface $X$ over $k$
is said to be relatively minimal if every birational morphism from $X$ to another
smooth projective surface  is necessarily an isomorphism. The relatively minimal smooth 
rational projective surfaces are known to be, up to isomorphism, the surfaces on the following list:
$$ \mathbb{P}_k^2\, ;\quad  X_0=\mathbb{P}_k^1 \times \mathbb{P}_k^1\,;\quad X_n\,, \ n\ge 2$$
(see \cite[Example V.5.8.2, Rem. V.5.8.4]{Hartshorne}).  The $X_n$ are referred to as the Hirzebruch surfaces and  are described in \cite[Sect. V.2]{Hartshorne}. Each $X_n$ is a rational ruled projective 
surface defined by
$$X_n = \mathbf{P}\left(\mathcal{E}\right) \quad\mbox{for}\quad
\mathcal{E}=\mathcal{O}_{\mathbb{P}^1_k}\oplus\mathcal{O}_{\mathbb{P}^1_k}(-n) \quad\mbox{over}\quad \mathbb{P}^1_k.$$
Here, $\mathbf{P}(\mathcal{E})=\mathbf{Proj}(\mathrm{Sym}(\mathcal{E}))$, where 
$\mathrm{Sym}(\mathcal{E})$
is the symmetric algebra of $\mathcal{E}$. It follows from 
\cite[Example V.5.7.1 and Exercise V.5.5]{Hartshorne} that each $X_n$ can be obtained from 
$X_0=\mathbb{P}^1_k\times \mathbb{P}^1_k$ by a finite sequence of monoidal transformations, via the following recursion:
Let $D_0=\mathbb{P}^1_k\times\infty$ and $D'_0=\infty\times\mathbb{P}^1_k$. 
For any nonnegative integer $i$, the following two steps (a) and (b) transform $X_i$ into $X_{i+1}$.   Suppose that $X_i$, together with curves $D_i$, $D'_i$ $\subset X_i$, has been determined.  

\begin{enumerate}
\item[(a)] Blow up the crossing point of $D_i$ and $D'_i$.  This results in a new exceptional curve $E_i$.
\item[(b)] Blow down the proper transform of $D_i$.  This leads to new boundary
curves $\widetilde{E_i}$ and $\widetilde{D'_i}$. Let $D_{i+1}=\widetilde{E_i}$ and $D'_{i+1}=\widetilde{D'_i}$.
\end{enumerate}
We thus obtain the following picture relating $\overline{\mathrm{Aut}_\Lambda(P) . C}$ to $\overline{\mathrm{Aut}_\Lambda(P) . C}^{\#}$ and
$\overline{\mathrm{Aut}_\Lambda(P) . C}^\dagger$:
\begin{equation}
\xymatrix @R=1pc{&\overline{\mathrm{Aut}_\Lambda(P) . C}^\dagger\ar[rd]\ar[ldd]_{\rho}\\
&&\overline{\mathrm{Aut}_\Lambda(P) . C}^{\#}\ar[rd]\\
X&&&\overline{\mathrm{Aut}_\Lambda(P) . C}  }
\end{equation}
Here all arrows stand for birational morphisms, and $\overline{\mathrm{Aut}_\Lambda(P) . C}^\#\to \overline{\mathrm{Aut}_\Lambda(P) . C}$ is finite. Moreover, $X$ is a relatively minimal smooth rational projective surface, and $\rho$ is a finite sequence of monoidal transformations.

\section{Orbit closures and rational surfaces}
\label{sec:surfaces}
\setcounter{equation}{0}

The formulation of our theorem refers to the notation introduced in section \ref{sec:prelim}. 
In particular, $T$ is a simple $\Lambda$-module with projective cover $P=\Lambda e$,  and $C$ is a point in $\grass$ for some fixed positive integer $d$.  Moreover, we assume  that the orbit
$\mathrm{Aut}_\Lambda(P) . C$ of $C$ in $\grass$ is two-dimensional, and denote by $\overline{\mathrm{Aut}_\Lambda(P) . C}$ its closure 
in $\grass$, the closure being endowed with the induced reduced structure.  

Recall that, by Lemma \ref{lem:birgeaffine}, there is an injective morphism 
\begin{equation}
\label{eq:psi}
\Psi:\mathbb{A}^2_k \to \overline{\mathrm{Aut}_\Lambda(P) . C}
\end{equation}
which maps $\mathbb{A}^2_k$ onto the open dense subset $\mathrm{Aut}_\Lambda(P) . C$ of
$\overline{\mathrm{Aut}_\Lambda(P) . C}$. 

The following is our main result.

\begin{Theorem}
\label{thm:main} 
For some natural number $n_0$ which depends only on $k$ and $\mathrm{dim}_k(C)$, there exists a birational morphism 
$$\rho:\overline{\mathrm{Aut}_\Lambda(P) . C}^\dagger \to  X$$ with the following properties:  $X$ is a relatively minimal smooth rational projective surface among
$$\mathbb{P}^2_k,\  \ X_n, \quad\mbox{for}\quad 0 \le n \le n_0, \ n \ne 1,$$
and there is a bound $c$, again depending solely on $k$ and $\mathrm{dim}_k(C)$, such that $\rho$ blows down at most $c$ irreducible curves.
\end{Theorem}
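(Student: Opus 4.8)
The plan is to bound the geometry of $\overline{\mathrm{Aut}_\Lambda(P).C}^\dagger$ by bounding the \emph{degree} of a suitable projective embedding of $\overline{\mathrm{Aut}_\Lambda(P).C}$, and then invoking classical surface theory. First I would use Lemma~\ref{lem:birgeaffine} together with the explicit description of the action on the classical Grassmannian $\mathcal{G}r(d',JP)$: the orbit $\mathrm{Aut}_\Lambda(P).C$ is parametrized by $C\cdot(e+t_1\omega_1+t_2\omega_2)$, and the Pl\"ucker coordinates of this point are polynomials in $t_1,t_2$ whose degrees are bounded in terms of $\dim_k(JP)$ (hence in terms of $\dim_k C$ and $\dim_k P$, and ultimately $\dim_k C$ once one notes $\dim_k P\le \dim_k C + d$ and, more to the point, that $d<\dim_k JP$ forces all the relevant dimensions to be controlled by $\dim_k C$ together with the structure of $\Lambda$ — actually one must be a little careful here, see below). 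Consequently, the closure $\overline{\mathrm{Aut}_\Lambda(P).C}$, viewed inside the Pl\"ucker $\mathbb{P}^N$, has degree $\delta$ bounded by a function of $k$ (really just $\mathrm{char}\,k$, or nothing) and $\dim_k C$. This is the technical heart of the argument.

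Next I would pass to the normalization $\overline{\mathrm{Aut}_\Lambda(P).C}^\#$ and the minimal desingularization $\overline{\mathrm{Aut}_\Lambda(P).C}^\dagger$. Normalization is a finite morphism, so it does not increase the degree of the (possibly non-normal) surface with respect to a fixed very ample line bundle; and the minimal desingularization $\pi:\overline{\mathrm{Aut}_\Lambda(P).C}^\dagger\to\overline{\mathrm{Aut}_\Lambda(P).C}^\#$ is crepant away from the finitely many singular points and in any case is an isomorphism over the smooth locus. The key numerical consequence I would extract is a bound on $(K_Y)^2$, where $Y=\overline{\mathrm{Aut}_\Lambda(P).C}^\dagger$: since $Y$ is a smooth rational surface obtained from its own relatively minimal model by a sequence of $\rho$ monoidal transformations, one has $K_Y^2 = K_X^2 - \rho$ with $K_X^2\in\{8,9\}$, so bounding $\rho$ is equivalent to bounding $K_Y^2$ from below. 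To get such a bound I would use the pullback $H=\pi_1^*\mathcal{O}(1)$ of the Pl\"ucker hyperplane class to $Y$ (via the composite $Y\to\overline{\mathrm{Aut}_\Lambda(P).C}^\#\to\overline{\mathrm{Aut}_\Lambda(P).C}\hookrightarrow\mathbb{P}^N$); then $H^2=\delta$ is bounded, and on a rational surface the Noether formula $K_Y^2+c_2(Y)=12\chi(\mathcal{O}_Y)=12$ together with adjunction $H\cdot(H+K_Y)=2p_a(H)-2$ and the fact that one may take $H$ to be represented by an irreducible curve of geometric genus zero (the generic hyperplane section of a rational surface is rational, by Bertini in characteristic zero — in positive characteristic one argues via the normalization of the generic hyperplane section and a bound on its arithmetic genus) gives $H\cdot K_Y$ bounded in terms of $\delta$, whence $K_Y\equiv$ something of bounded ``$H$-degree''. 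From here the Hodge index theorem applied to the span of $H$ and $K_Y$ in $\mathrm{Num}(Y)$ forces $K_Y^2 \ge -(H\cdot K_Y)^2/H^2$ up to the correction term, and the right-hand side is bounded. Hence $\rho = K_X^2 - K_Y^2$ is bounded, giving the constant $c$; and since a relatively minimal rational surface is either $\mathbb{P}^2_k$, $X_0$, or a Hirzebruch surface $X_n$ with $n\ge 2$, and $n$ is recovered (up to the monoidal transformations) from the intersection form on $Y$, the bound on $\rho$ together with the bound on $H^2$ also bounds the possible $n$ by some $n_0$: indeed each blow-up changes self-intersections of proper transforms by at most $1$, so the negative self-intersection curves on $Y$ have self-intersection bounded below by $-(n+\rho)$, and their classes must remain effective with $H$-degree $\le\delta$, which caps $n$.

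The main obstacle, as flagged, is the very first step: genuinely controlling the degrees of the Pl\"ucker coordinates of the orbit parametrization \emph{purely} in terms of $k$ and $\dim_k C$, without any a priori bound on $\dim_k\Lambda$. The point is that the polynomials defining $\Psi$ in Pl\"ucker coordinates have degree at most $d' = \dim_k(JP) - d$ — no, more precisely, the entries of the $d'\times d'$ minors involve the action of $\omega_1,\omega_2$ on $JP$, and these are nilpotent of index at most $\dim_k JP$; but one observes that everything happens inside $P/C' $-type quotients of dimension $\le d$, or inside $JP/C$ of dimension $d' - (\dim_k P - \dim_k JP) = d' - 1$... so the effective degree bound is in terms of $d' - 1 = \dim_k C - 1$ and $d$. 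Since we also have $d \le \dim_k JP$ and the orbit has dimension $2$ forcing $\dim_k eJe$ (modulo the stabilizer) to be small, and since $d = \dim_k(P/C)$ while $\dim_k C$ is given, one does get $d$ and $d'$ both bounded by a function of $\dim_k C$ — here one uses $\dim_k P = d + d'$ with $d' \ge 1$, hence $d' = \dim_k C$ exactly when $C\subseteq JP$ has full codimension $d$ in $P$... wait, $d' = \dim_k(P) - d = \dim_k C$ by the defining relation $d + d' = \dim_k P$ and $\dim_k C = d'$. Good: so $d' = \dim_k C$ and the degree bound is literally in terms of $\dim_k C$, and $d$ need not be separately bounded since it does not enter the Pl\"ucker degree estimate for the \emph{subspace} $C\cdot u$ of the \emph{fixed} ambient space $JP$ — rather, the relevant ambient dimension $\dim_k JP = d + d' = d + \dim_k C$ does enter, but only through $d = \dim_k(P/C)$, and $P/C$ being a module with simple top $T$ and dimension $d$... hmm. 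I would therefore devote the bulk of the proof to this estimate, carefully isolating which invariants are genuinely needed. The second most delicate point is the positive-characteristic version of ``generic hyperplane section of a rational surface is rational'': here I would cite the standard fact that the generic fibre argument or a direct bound on the arithmetic genus of the hyperplane section via $p_a = 1 + \frac12(H^2 + H\cdot K_Y)$ suffices, since I only need $H\cdot K_Y$ bounded \emph{above} (which follows from effectivity/adjunction bounds) in order to run the Hodge index estimate, and an \emph{upper} bound on $p_a$ is immediate from Castelnuovo-type bounds on the degree of the embedded surface. Everything else — that normalization and minimal desingularization do not affect rationality, that $X$ lies on the stated list, that monoidal transformations are the only birational contractions of smooth surfaces — is recorded in section~\ref{sec:prelim} or is classical \cite{Hartshorne}.
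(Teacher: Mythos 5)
There is a genuine gap at the decisive step. Your reduction is: bound $\rho$ (the number of blow-downs from $Y=\overline{\mathrm{Aut}_\Lambda(P).C}^\dagger$ to a relatively minimal model) by bounding $K_Y^2$ \emph{from below}, using $H^2=\delta$ and $H\cdot K_Y$ bounded, ``via Hodge index.'' But the Hodge index theorem goes the wrong way: for a nef class $H$ with $H^2>0$, writing $K_Y=\frac{H\cdot K_Y}{H^2}H+K'$ with $K'\in H^\perp$ gives $K'^2\le 0$, hence $K_Y^2\le (H\cdot K_Y)^2/H^2$ — an \emph{upper} bound on $K_Y^2$, which is useless here (you need $K_Y^2$ bounded below, equivalently the Picard rank of $Y$ bounded above, since $\rho=K_X^2-K_Y^2$ with $K_X^2\in\{8,9\}$). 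Moreover, no inequality of the kind you invoke can work: the pair $(H^2,\,H\cdot K_Y)$ simply does not control $K_Y^2$ from below. What is really needed is a bound, in terms of the degree $\delta$ alone, on the complexity of the minimal resolution of the singularities of the normalization of a projective surface of degree $\delta$ (e.g.\ a degree-$\delta$ surface can a priori carry an $A_n$-type singularity whose resolution contributes $n$ exceptional curves, and one must show $n$ is bounded by $\delta$-data). That boundedness statement is exactly the hard content of the theorem, and your sketch assumes it rather than proves it. The paper supplies it by a completely different mechanism: Step 1 shows the birational map $\psi:\mathbb{P}^1_k\times\mathbb{P}^1_k\dashrightarrow\overline{\mathrm{Aut}_\Lambda(P).C}$ has boundedly many fundamental points and is given near each by polynomials of bounded degree generating an ideal $I_w$ containing a bounded power of the maximal ideal; Step 2 resolves $\psi$ by blowing up these ideals; and Steps 3--5 prove a \emph{uniform} bound on the number of monoidal transformations needed to dominate the blow-up at \emph{any} ideal of bounded codimension, by organizing all such ideals into a Grassmannian $\mathcal{G}_a$, applying Zariski's theorem on dominating modifications at the generic point of each irreducible component, spreading the resulting monoidal transformations over a dense open subset, and concluding by Noetherian induction on $\mathcal{G}_a$. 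Your degree estimate for the Pl\"ucker image (degrees $\le d'=\dim_k C$ in $t_1,t_2$, hence $\delta$ bounded) is fine and parallels the paper's Step 1, but it does not substitute for this boundedness argument.

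Two further, smaller problems: your argument capping the Hirzebruch index $n$ is incomplete — knowing $\rho$ and $H^2$ bounded does not by itself exclude irreducible curves of very negative self-intersection on $Y$; the paper instead takes the $(-n)$-section $L$ on $X_n$, notes its proper transform on the surface $S$ dominating both $X$ and $\mathbb{P}^1_k\times\mathbb{P}^1_k$ has self-intersection at most $-n+\tilde c$, and derives a contradiction from the fact that effective curves on $\mathbb{P}^1_k\times\mathbb{P}^1_k$ have non-negative self-intersection, so you would need your $Y$ (or a bounded modification of it) to dominate $\mathbb{P}^1_k\times\mathbb{P}^1_k$ — which again requires resolving $\psi$ with bounded complexity. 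Also, ``each blow-up changes self-intersections of proper transforms by at most $1$'' is only valid for curves passing through the center with multiplicity at most one (for a point of multiplicity $m$ the drop is $m^2$); the paper only uses the smooth-curve version \cite[Prop.~V.3.2]{Hartshorne}.
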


For simplicity we regard $k$ as fixed from now on, so that we will not have to address the
dependence on $k$ of various bounds.  When we say that a real-valued function is bounded, we mean that it is 
bounded from above by some explicit function of $\mathrm{dim}_k(C)$.

We prove Theorem \ref{thm:main} in several steps which are carried out in detail below.  In Step 1
we bound the number of points at which the birational map 
$\xymatrix{\psi:\mathbb{P}^1_k \times \mathbb{P}^1_k \ar@{-->}[r]& \overline{\mathrm{Aut}_\Lambda(P) . C}}$
resulting from  the morphism $\Psi$ in (\ref{eq:psi}) is not defined.  At each point $w$ where this rational map is undefined, we also
bound the complexity of the map $\psi$ in a neighborhood of $w$.  In Step 2 we show that for each such $w$, the birational map
$\psi$ becomes a rational morphism near $w$ after the blow-up of an ideal that contains a bounded power (i.e. a power with bounded exponent) of the maximal ideal of the local ring of $w$. In Step 3 we consider the Grassmannian consisting of all ideals of the local ring of $w$
which contain a bounded power of the maximal ideal.  We show that, to prove Theorem \ref{thm:main},
it is enough to prove that 
the blow-up of $\mathbb{P}^1_k \times \mathbb{P}^1_k$ at any ideal
in this Grassmannian can be dominated by a blow-up of $\mathbb{P}^1_k \times \mathbb{P}^1_k$ which results from a bounded number of 
successive monoidal transformations.
In Steps 4 and 5 we complete the proof using  a theorem
of Zariski  about dominating proper birational morphisms between normal projective surfaces using
a finite number of monoidal transformations.

\vskip .1in

\noindent \textbf{Step 1:} \textit{  Write $\mathbb{P}^1_k = \mathbb{A}^1_k \cup \{\infty\}$ and identify $\mathbb{A}^2_k $ with $\mathbb{A}^1_k \times \mathbb{A}^1_k$.  This gives an embedding of $\mathbb{A}^2_k$  into $\mathbb{P}^1_k \times \mathbb{P}^1_k$.
The morphism $\Psi$ from $(\ref{eq:psi})$
defines a birational map 
$$\xymatrix{\psi:\mathbb{P}^1_k \times \mathbb{P}^1_k \ar@{-->}[r]& \overline{\mathrm{Aut}_\Lambda(P) . C}}.$$
Let $U$ be the domain of definition of $\psi$.  Then the set $D = \mathbb{P}^1_k \times \mathbb{P}^1_k  - U$ of fundamental points of $\psi$ is a finite set of closed points, and there is an effective bound, depending only on $\mathrm{dim}_k(C)$,
for the cardinality of this set.  Moreover, there is a projective embedding of $\grass$
into a projective space $\mathbb{P}^h_k$ over $k$ satisfying the following conditions for each $w \in D$.  There are  local parameters $t_1$ and $t_2$ at $w$ such that the local ring 
of $w$ on $\mathbb{P}^1_k \times \mathbb{P}^1_k $ is isomorphic to the localization $A_w$ of
$k[t_1,t_2]$ at the maximal ideal generated by $t_1$ and $t_2$.  The restriction of $\psi$
to the open subset $\mathrm{Spec}(A_w) - w$ of $\mathrm{Spec}(A_w)$ is a morphism to $\mathbb{P}^h_k$
which is defined, in terms of homogeneous coordinates, by $(q_0(t_1,t_2):\cdots:q_h(t_1,t_2))$ where
the $q_i(t_1,t_2)$ are polynomials in $k[t_1,t_2]$ of bounded degree and the $A_w$-ideal $I$
generated by the $q_i(t_1,t_2)$ contains a bounded power of the maximal ideal of $A_w$.
(Note: We do not claim that $h$ is bounded.)}
\medbreak

\noindent {\bf Proof of Step 1:} 
Let $m = \mathrm{dim}_k(P)$ and fix a basis $\{b_i\}_{i = 1}^m$ for $P$ over $k$.  Relative to this
basis, $C$ is spanned by a set of $d' = \mathrm{dim}_k(C)$ row vectors in $k^m$.  Recall that the map
$\Psi:\mathbb{A}^2_k = \mathbb{A}^1_k \times \mathbb{A}^1_k \to \overline{\mathrm{Aut}_\Lambda(P) . C}$ of (\ref{eq:psi})
was constructed in the following way.  If $z, u \in k$ define a point $(z,u) \in \mathbb{A}^2_k$,
then
$$\Psi(z,u) = C \cdot (e + z\, \omega_1 + u \,\omega_2)$$
for some fixed elements $\omega_1$ and $\omega_2$ of $eJe$.  This implies that there is a set of  ${d'}$ row vectors $v_1,\cdots,v_{d'}$ of size $m$ whose entries are polynomials
which are at most linear in the indeterminates $z$ and $u$ such that, if one specializes $z$ and $u$ to elements of $k$,
the vectors $v_1,\ldots,v_{d'}$ specialize to independent vectors which span the subspace of $P$
corresponding to $\Psi(z,u) $. 

It is well-known that the set $D$ of fundamental points of $\psi$ is closed in $\mathbb{P}^1_k \times \mathbb{P}^1_k$ \,  and has codimension $2$ (see \cite[Lemma V.5.1]{Hartshorne}).  In other words,  the morphism $\Psi$ of (\ref{eq:psi}) has an extension which is defined
off a finite set of closed points.  We need to make this extension more explicit in order to
bound the number of fundamental points of $\psi$.

Let us first constructively extend $\Psi$ to a large open Zariski neighborhood of
the generic point of $\infty \times \mathbb{P}^1_k$.  The local ring of this generic point may
be identified with the localization $R$ of the polynomial ring $k[z^{-1},u]$ at the prime
ideal $k[z^{-1},u] \cdot z^{-1}$.  We will apply the following lemma to the situation where
$r = z^{-1}$,\, $s = u$, and $V$ is the matrix whose $(i,j)$ entry $v_{i,j}$
is the $j$-th component of the vector $v_i$.  

\begin{Lemma}
\label{lem:fixit}  Suppose $k[r,s]$ is a polynomial ring in two indeterminates $r$ and $s$.
Let $F = \mathrm{Frac}(k[r,s]) = k(r,s)$, and define $R$ to be the localization of $k[r,s]$ at the
prime ideal \hbox{$k[r,s]\cdot r$}.  Suppose $V = (v_{i,j})_{1 \le i \le {d'}, 1 \le j \le m}$ is a matrix
of elements of $F$ whose entries can each be written in the form $f(r,s)/\tilde{f}(r,s)$
such that $f(r,s), \tilde{f}(r,s) \in k[r,s]$ have bounded degrees.  Suppose
furthermore that the rows of $V$ are linearly independent over $F$, so that, over $F$,
they span a ${d'}$-dimensional subspace $W$ of $F^m$.  Viewing $R^m$ as canonically embedded into $F^m$, we obtain: 
\begin{enumerate}
\item[i.] $R^m \cap W$ is a free $R$-module direct summand of $R^m$ that has rank $d' = \mathrm{dim}_F W$.  In particular, $R^m \cap W$ contains a basis
for $W$ over $F$.   
\item[ii.]  There is a ${d'} \times {d'}$ matrix $Y = (y_{i,j})$ over $F$ 
with the following properties:  The  rows of $Y \cdot V$ form an $R$-basis for $R^m \cap W$.  Each $y_{i,j}$
can be written as a ratio $g(r,s)/\tilde{g}(r,s)$ in which $g(r,s), \tilde{g}(r,s) \in k[r,s]$ are polynomials
of bounded degrees, the bound depending only on ${d'} = \mathrm{dim}_k(C)$ and the given bound on the degrees of the numerators and denominators of the $v_{i,j}$.
\end{enumerate}
\end{Lemma}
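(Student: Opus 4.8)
The statement is essentially a quantitative version of the fact that a finitely generated torsion-free module over a principal ideal domain is free; we need to track degree bounds through the construction. First I would observe that $R$ is a discrete valuation ring: it is the localization of the regular ring $k[r,s]$ at the height-one prime $k[r,s]\cdot r$, with uniformizer $r$ and residue field $k(s)$. Hence any finitely generated torsion-free $R$-module is free, and any $R$-submodule of $R^m$ that is a direct summand of a free module is again free. Part (i) then follows once we check that $R^m\cap W$ is a direct summand: $R^m/(R^m\cap W)$ embeds into $F^m/W\cong F^{m-d'}$, so it is torsion-free, hence free, hence the inclusion $R^m\cap W\hookrightarrow R^m$ splits; and $R^m\cap W$ has rank $d'$ because it spans $W$ over $F$ (any $w\in W$ can be scaled by a denominator in $k[r,s]$, even by a power of $r$ times a unit, to lie in $R^m$).

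For part (ii) the task is to produce the change-of-basis matrix $Y$ with controlled degrees. I would argue as follows. Clearing denominators, there is a single nonzero $\delta=\delta(r,s)\in k[r,s]$ of bounded degree (a product of the denominators $\tilde f$ of the $v_{i,j}$, or just their least common multiple, still of bounded degree) such that $\delta V$ has entries in $k[r,s]$. Next, write $\delta = r^a\cdot\delta_0$ where $r\nmid\delta_0$, so that $\delta_0$ is a unit in $R$; then $r^a V = \delta_0^{-1}(\delta V)$ has entries in $R$, and $a$ is bounded by $\deg\delta$. So after multiplying $V$ on the left by the scalar matrix $r^a\cdot\mathrm{Id}$ — which does not change the row span $W$ — we may assume $V$ has entries in $R$, indeed in $k[r,s]$ up to the fixed unit $\delta_0^{-1}$, with bounded degrees. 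The rows of (this normalized) $V$ now span a free submodule $M_0\subseteq R^m\cap W$ of rank $d'$, and $R^m\cap W / M_0$ is a finitely generated torsion $R$-module, so it is a quotient of $\bigoplus (R/r^{e_i}R)$. The key quantitative point is that $\sum e_i$ is bounded: it equals the $r$-adic valuation of the index $[R^m\cap W : M_0]$, which is at most the $r$-adic valuation of any nonzero $d'\times d'$ minor of $V$ (since such a minor lies in the ideal-theoretic index times the determinant of a basis matrix of $R^m\cap W$), and those minors are polynomials of bounded degree, hence of bounded $r$-adic valuation. Therefore $r^{N}(R^m\cap W)\subseteq M_0$ for a bounded $N$, so $R^m\cap W$ is generated over $R$ by $M_0$ together with the finitely many vectors $r^{-j}\cdot(\text{$R$-combinations of rows of }V)$ with $0\le j\le N$; carrying out Gaussian elimination over the PID $R$ on this finite generating set — all of whose coordinates are, up to the fixed unit, polynomials of bounded degree divided by $r^N$ — produces an $R$-basis, and records the needed combinations as the rows of a matrix $Y$ whose entries are ratios $g/\tilde g$ with $\deg g,\deg\tilde g$ bounded in terms of $d'$ and the input degree bound. (Concretely, $\tilde g$ can be taken to be $\delta\cdot r^{N}$ up to units, and the entries of $g$ arise as bounded-size subdeterminants of the bounded-degree matrix $\delta_0 V$.)

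The main obstacle, and the only place where genuine care is required, is the quantitative control in the previous paragraph: bounding the exponent $N$ with $r^N(R^m\cap W)\subseteq M_0$, equivalently bounding the length of the torsion module $R^m\cap W/M_0$, purely in terms of $d'$ and the degree bound on the $v_{i,j}$ — not in terms of $m$, which is unbounded. This is exactly why the hypothesis is phrased in terms of degrees rather than, say, the entries of $V$ outright. The resolution is to notice that all the relevant invariants can be read off from $d'\times d'$ minors of $V$: the module $R^m\cap W$ is determined by the Plücker coordinates of $W\subseteq F^m$, which are the $d'\times d'$ minors of $V$, and after clearing the common denominator these are polynomials of degree bounded by a function of $d'$ and the input bound; the $r$-adic valuations of the Plücker coordinates control the saturation, and their (bounded) degrees bound those valuations. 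Once this is in place, everything else is routine linear algebra over a discrete valuation ring. I would also remark that the lemma will be applied with $r=z^{-1}$, $s=u$ to extend $\Psi$ across the generic point of $\infty\times\mathbb{P}^1_k$, and then symmetrically at the generic points of $\mathbb{P}^1_k\times\infty$ and of the affine chart, so that the four affine charts of $\mathbb{P}^1_k\times\mathbb{P}^1_k$ together give the claimed extension of $\psi$ off a finite set of bounded cardinality.
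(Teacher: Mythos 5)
Your part (i) is the paper's argument: $R$ is a discrete valuation ring, $R^m/(R^m\cap W)$ is torsion-free hence free, so $R^m\cap W$ is a free direct summand of rank $d'$, and it spans $W$ after clearing denominators. For part (ii) you take a genuinely different route from the paper. The paper argues by induction on $d'$: it selects an entry $v_{i,j}$ of minimal valuation $q$ (with $|q|$ bounded by the degree data), rescales that row by $r^{-q}$ so the pivot becomes a unit of $R$, eliminates the $j$-th column from the remaining rows, proves the splitting $R^m\cap W=(R\cdot r^{-q}v_1)\oplus(R^m\cap W')$, and iterates; the bound on the entries of $Y=Y_2Y_1Y_0$ falls out of the finitely many explicit elementary matrices. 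You instead argue globally: after clearing denominators you bound the length of $(R^m\cap W)/M_0$, where $M_0$ is the $R$-row span of the normalized $V$, by the $r$-adic valuation of a nonzero maximal minor of $V$, and these minors are polynomials of bounded degree. That bound is correct (write $V=TB$ with $B$ a basis matrix of $R^m\cap W$; then $\det V_{J}=\det T\cdot\det B_{J}$ for every column set $J$, and $\mathrm{ord}_r\det T$ is the length in question), and it is arguably cleaner than the paper's bookkeeping. However, the step where you actually produce $Y$ is only asserted: ``Gaussian elimination over the PID $R$ produces an $R$-basis whose change-of-basis matrix has bounded-degree entries'' is precisely the content of part (ii) — elimination over $R$ multiplies by units of $R$, i.e.\ by ratios of polynomials, and without a pivoting rule the degrees could a priori grow — and your proposed generating set ($M_0$ together with $r^{-j}$ times $R$-combinations of rows of $V$) is not finite as stated. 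The repair is already latent in your parenthetical and in your Pl\"ucker remark: choose a column set $J$ for which $\mathrm{ord}_r(\det V_J)$ is minimal among the nonzero maximal minors, and take $Y=V_J^{-1}=\mathrm{adj}(V_J)/\det(V_J)$. Indeed, since $R^m\cap W$ is a direct summand of $R^m$ by part (i), some maximal minor of $B$ is a unit, so $\mathrm{ord}_r\det V_{J'}=\mathrm{ord}_r\det T+\mathrm{ord}_r\det B_{J'}$ attains its minimum exactly when $\det B_{J'}$ is a unit; for the minimizing $J$ one gets $B_J\in\mathrm{GL}_{d'}(R)$, hence the rows of $V_J^{-1}V=B_J^{-1}B$ form an $R$-basis of $R^m\cap W$, and the entries of $V_J^{-1}$ are cofactor-over-determinant ratios of polynomials of degree bounded in terms of $d'$ and the input bound. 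With that substitution your argument closes; what the paper's inductive elimination buys instead is a completely elementary proof that never invokes minors, at the cost of tracking the degree bounds through $d'$ successive steps.
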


\begin{proof} The ring $R$ is a discrete valuation ring, and $R^m \cap W$ is a pure $R$-submodule
of $R^m$ in the sense that $R^m / (R^m \cap W)$ is $R$-torsion free. Thus $R^m / (R^m \cap W)$ is free over $R$, meaning that $R^m \cap W$ is a free direct summand of $R^m$.  In particular, $R^m \cap W$ is free of rank ${d'} = \mathrm{dim}_F W$ as an $R$-module.

Let $\mathrm{ord}:F \to \mathbb{Z}$ be the discrete valuation on $F = k(r,s)$
which is associated to the prime ideal $k[r,s] \cdot r$ of $k[r,s]$.  Our assumptions on the $v_{i,j}$
imply that there is an integer $l \ge 0$ such that $v_{i,j} \ne 0$ implies 
$|\mathrm{ord}(v_{i,j})| \le l$.

We prove the rest of Lemma \ref{lem:fixit} using 
induction on ${d'}$.  If ${d'} = 1$, then $V$ has a single row, and $W$ is the $F$-space
spanned by this row.  If $t = \mathrm{min}_{j = 1}^m \mathrm{ord}(v_{1,j})$, then
$R^m \cap W$ is the free $R$-module on the vector $r^{-t}(v_{1,1},\ldots,v_{1,m})$.
So  we can take the $1 \times 1$ matrix $Y$ to be $(r^{-t})$.  Since some $v_{1,j}$ is non-zero, we have $|t| \le l$.  

Suppose now that ${d'} > 1$.  Let $(i,j)$ be a pair of indices with $1 \le i \le {d'}$
and $1 \le j \le m$ such that $q = \mathrm{ord}(v_{i,j})$ is minimal among the orders of
the non-zero entries of $V$. As above, $|q| \le l$. By multiplying
$V$ by a ${d'} \times {d'}$ permutation matrix on the left, we may assume $i = 1$.
The vector  $r^{-q}(v_{1,1},\ldots,v_{1,m})$
lies in $R^m \cap W$ and $r^{-q} v_{1,j}$ is a unit of $R$.  We multiply 
$V$ by the diagonal matrix $Y_0 = \mathrm{diag}(r^{-q},1,\ldots,1)$
to make the first row of $Y_0 V$
equal to $r^{-q}(v_{1,1},\ldots,v_{1,m})$.  We now subtract $v_{a,j}/(r^{-q} v_{1,j})$ times
the first row of $Y_0 V$ from the $a^{th}$ row of $Y_0 V$ for $2 \le a \le {d'}$
to arrive at a matrix $Y_1 Y_0 V$ which has first row $r^{-q}(v_{1,1},\ldots,v_{1,m})$,
zero entries in the $j$-th column except for the unit $r^{-q} v_{1,j}$ such that the $F$-span of the rows of $Y_1 Y_0 V$ equals $W$.
Let $W'$ be the $F$-span of rows $2,\ldots,{d'}$ of $Y_1 Y_0 V$.  Note
that the $j$-th component of every element of $W'$ is zero.  We claim that
\begin{equation}
\label{eq:first}
R^m \cap W = (R\cdot r^{-q}(v_{1,1},\ldots,v_{1,m})) \oplus (R^m \cap W').
\end{equation}
It is clear that the right hand side is contained in the left hand side, since $W$
is the $F$-span of all the rows of $Y_1 Y_0 V$.  For the opposite containment,
suppose that some $F$-linear combination of the rows of $Y_1 Y_0 V$
lies in $R^m$.  If $\alpha \in F$ is the coefficient of the first row in this linear
combination, then $\alpha \cdot r^{-q} v_{1,j}$ is the $j^{th}$ component
of the linear combination, and this must be in $R$.  Since $r^{-q} v_{1,j}$
is a unit in $R$, this forces $\alpha \in R$.  So the multiple of the first
row in the linear combination lies in $R\cdot r^{-q}(v_{1,1},\ldots,v_{1,m})$, and
on subtracting this off we get an element of $R^m \cap W'$. This
proves (\ref{eq:first}). 

Because of our assumptions about the degrees of the numerators and denominators
of the non-zero $v_{i',j'}$, the non-zero entries of $Y_1 Y_0$  and of $Y_1 Y_0 V$ have numerators and denominators
of bounded degree.  We now apply our induction hypotheses to the $({d'} -1 ) \times m$ matrix $V'$ 
whose rows and columns are those of $Y_1 Y_0 V$ when we omit the first row. 
 This leads to a $({d'}-1) \times ({d'}-1)$ matrix $Y'$ such that the rows
of $Y' V'$ form a basis for $R^m \cap W'$.  We define $Y_2$ to be the ${d'} \times {d'} $ block
matrix with a one-by-one block equal to $1$ in the upper left corner followed
by a $({d'}-1) \times ({d'}-1)$ block given by $Y'$.  Now $Y_2 Y_1 Y_0 V$
has first row $r^{-q}(v_{1,1},\ldots,v_{1,m})$ and the remaining rows
span the free $R$-module $R^m \cap W'$.  Because of (\ref{eq:first}),
we may assume $Y = Y_2 Y_1 Y_0$.  The degrees of the numerators and
denominators of the non-zero entries of $Y$ are bounded since this is true for the non-zero entries of $Y_2$, $Y_1$ and $Y_0$.
\end{proof}  

We now plug into Lemma \ref{lem:fixit} the substitutions mentioned in the paragraph
just before the statement of Lemma \ref{lem:fixit}. In particular, $r = z^{-1}$ and $s = u$, and the
entries of $rV$ are polynomials in $r$ and $u$. 
We write the rows of $V$ as $v_1, \dots, v_{d'}$.  
Given $Y$ as guaranteed by the lemma, we write $Y\cdot V = (q_{i,j})_{1 \le i \le {d'}, 1 \le j \le m}$, where $q_{i,j} = q_{i,j}(z^{-1},u)$
is a ratio of polynomials of bounded degrees in the indeterminates $r = z^{-1}$
and $u$. The fact that $q_{i,j}$ lies in the discrete valuation ring $R = k[z^{-1},u]_{(z^{-1})}$
implies that $z^{-1}$ does not divide the denominator of any non-zero $q_{i,j}$
when $q_{i,j}$ is written as a quotient of coprime polynomials in $k[z^{-1},u]$.
By construction, the rows $q_i = (q_{i,1},\ldots,q_{i,m})$ of $Y \cdot V$
span the $R$-module $R^m \cap W$. We will need the following additional
fact.  Let $y$ be a least common multiple in $k[z^{-1},u]$ of the denominators 
of the entries of $Y$, so that $y$ is well defined up to multiplication by an
element of $k^*$.  Let $y'$ be the quotient of $y$ by the highest power
of $z^{-1}$ which divides $y$.  Due to $q_{i,j} \in R$, we conclude that the entries of $y' \,Y \cdot V$ are polynomials
in $k[z^{-1},u]$.  Furthermore, since the entries of $Y$ have numerators and denominators of bounded degree, the
degree of $y'$ is bounded as well. 

We have already shown that 
$R^m \cap W$ is a free $R$-module summand
of $R^m$.  This implies that the image of $R^m \cap W$ in $(R/Rz^{-1})^m$
has dimension ${d'}$ over the field $R/Rz^{-1} \cong k(u)$.  Therefore, we
can use the  rows of $Y \cdot V$ to define a map from a non-empty open subset of
$\mathrm{Spec}(k[z^{-1},u])$ to the Grassmannian  $\grass$
which contains a dense open subset of the affine line $\infty \times \mathbb{A}^1_k$
defined by setting $z^{-1}$ equal to $0$.  Since $Y$ is generically invertible,
this map extends the morphism $\Psi : \mathbb{A}^1_k \times \mathbb{A}^1_k \rightarrow \grass$.  To see at which of the points in $\infty \times \mathbb{A}^1_k$ this extension fails to be defined,
let $\overline{(Y\cdot V)}$ be the ${d'} \times m$ matrix with entries
in $R/Rz^{-1} = k(u)$ which results from reducing the entries of $Y \cdot V$
modulo $R z^{-1}$.  Since the entries of $Y \cdot V$ are ratios of polynomials
of bounded degree in $z^{-1}$ and $u$ and, when written in terms of coprime numerators and denominators, none of these ratios has a denominator divided by $z^{-1}$, we conclude that the entries of  $\overline{(Y\cdot V)}$
are ratios of polynomials in $k[u]$ of bounded degrees.  The rows of
$\overline{(Y\cdot V)}$ are linearly independent over $k(u)$, since
the image of $R^m \cap W$ in $(R/Rz^{-1})^m = k(u)^m$
has dimension ${d'}$ over $k(u)$.  We showed
above that there is a polynomial $y' \in k[z^{-1},u]$ of bounded degree which is not divisible
by $z^{-1}$ such that $y' \,Y \cdot V$ has all its entries in $k[z^{-1},u]$.  It follows that if $\overline{y'} \in k[u]$
is the reduction of $y'$ mod $Rz^{-1}$, then $\overline{y'} \ne 0$ and the denominator
of every element of $\overline{(Y\cdot V)}$ divides $\overline{y'}$.  Thus the rows
of $\overline{(Y\cdot V)}$ can be specialized to every point of $\infty \times \mathbb{A}^1_k$
which is not a zero of the polynomial $\overline{y'} \in k[u]$, and the number of such
points is bounded.  We need to show that these rows are independent off a bounded
set of points of $\infty \times \mathbb{A}^1_k$.  This is so because there is a 
${d'} \times {d'}$ minor
of $\overline{(Y\cdot V)}$ whose determinant is not $0$ in $k(u)$.
This determinant is also a ratio of polynomials in $k[u]$ of bounded
degree, so the number of zeros and poles of the determinant is bounded.
So we have now bounded the number of fundamental points on $\infty \times \mathbb{A}^1_k$
 of the rational map 
\begin{equation}
\label{eq:psiG}
\xymatrix{\psi:\mathbb{P}^1_k \times \mathbb{P}^1_k \ar@{-->}[r]&  \overline{\mathrm{Aut}_\Lambda(P) . C} \hookrightarrow \grass. }
\end{equation}
One can similarly bound the number of points of $\mathbb{A}^1_k \times \infty$
which lie outside the domain of definition of $\psi$.  Since there is only 
one other point $\infty \times \infty$ where the rational map might be undefined, this effectively bounds the total number of fundamental points of $\psi$.

We now show the remaining claims of Step 1, concerning
the restriction of $\psi$ to $\mathrm{Spec}(A_w) - w$ where $A_w$ is the local
ring of $\mathbb{P}^1_k \times \mathbb{P}^1_k$ at a fundamental point $w$ of $\psi$.
To simplify notation, we set $A=A_w$.

Set $h = \left ( {m}\atop{d'} \right ) - 1 =  \left ( {m}\atop{d} \right ) - 1$.   We will find that 
the canonical embedding
$$\grass \hookrightarrow \mathcal{G}r(d',k^m) \overset{\iota} \to \mathbb{P}\left(\Lambda^{d'}(k^m)\right) = \mathbb{P}^h_k,$$
where $\iota$ sends any subspace $Z \subset k^m$ of dimension $d'$ to the point in the projective space defined by the $d'^{th}$ exterior power of a basis for $Z$, satisfies the requirements spelled out in Step 1.  Indeed, $\iota$ is well-known to be an injection with closed image, and the variety structure on $\mathcal{G}r(d',k^m)$ is defined so as to make $\iota$ an isomorphism from $\mathcal{G}r(d',k^m)$ onto a closed subvariety of  the projective space $\mathbb{P}\left(\Lambda^{d'}(k^m)\right)$, the latter endowed with its induced reduced structure.  Consequently, the restriction to $\grass$ is a closed immersion as well.  We keep this immersion fixed in the sequel and regard
$\psi$ as a rational map from $\mathbb{P}^1_k \times \mathbb{P}^1_k$
to $\mathbb{P}^h_k$.  

Let $w$ belong to the set $D$ of fundamental points of $\psi$, and let $A=A_w$ be the local ring of 
$\mathbb{P}^1_k \times \mathbb{P}^1_k$ at $w$.  
Let $t_1, t_2$ be local parameters
at $w$.  Thus $A$ is the localization of $k[t_1,t_2]$ at the maximal
ideal generated by $t_1$ and $t_2$. 
We know that $D \subset (\infty \times \mathbb{P}^1_k) \cup (\mathbb{P}^1_k \times \infty)$ consists of a bounded number of closed points.  Suppose $w \in \infty \times \mathbb{P}^1_k$.  Above, we constructed an extension, $\Psi: (\mathbb{P}^1_k \times \mathbb{P}^1_k - D) \rightarrow \mathbb{P}^h_k$, of the original morphism $\Psi$ on $\mathbb{A}^1_k \times \mathbb{A}^1_k$ that underlies $\psi$.  In particular, this construction shows that the  morphism from $\mathrm{Spec}(A) - D$ to $\mathbb{P}^h_k$ induced by $\Psi$ has the form 
$$(t_1,t_2) \to (g_0(t_1,t_2):\cdots:g_h(t_1,t_2)) = (g_0:\cdots:g_h),$$
where $g_i(t_1,t_2) \in k(t_1,t_2)$ are quotients of polynomials of bounded degrees in $t_1,t_2$.  In fact, we can bound the denominators of the $g_i(t_1,t_2)$ by writing
the entries of the matrix $Y$ as ratios of polynomials in $t_1$ and $t_2$ and by bounding
the denominators of the entries of $Y$. Since the number of entries of $Y$ is $(d')^2$,
we conclude that there is a non-zero polynomial $p = p(t_1,t_2) \in k[t_1,t_2]$ of bounded degree
such that $p(t_1,t_2) g_i(t_1,t_2) \in k[t_1,t_2]$ for all $i$.

The pullback of $\mathcal{O}_{\mathbb{P}^h_k}(1)$ to $\mathrm{Spec}(A) - w$
is a line bundle $\mathcal{L}$ on $\mathrm{Spec}(A) - w$.   Since $w$
has codimension $2$ in the regular scheme $\mathrm{Spec}(A)$,
the Weil divisor class groups of $\mathrm{Spec}(A) - w$ and $\mathrm{Spec}(A)$
are the same, and these are trivial because $A$ is a regular local ring.
Hence $\mathcal{L}$ is trivial, and we can identify $\mathcal{L}$ with
the structure sheaf $\mathcal{O}$ of $\mathrm{Spec}(A) - w$.  The  pullbacks of the coordinate
global sections of $\mathcal{O}_{\mathbb{P}^h_k}(1)$ define elements $q_0,\ldots,q_h$ of
$\Gamma(\mathrm{Spec}(A) - w, \mathcal{O}) = \Gamma(\mathrm{Spec}(A),\mathcal{O}) = A$
which generate $\mathcal{O}$ at every point of $\mathrm{Spec}(A) - w$.
Thus $q_0,\ldots,q_h$ are elements of $A$ and the $A$-ideal $I$ generated by $q_0,\ldots,q_h$
 has localization $A_P$ at every prime ideal $P$ of $A$ different from the maximal ideal corresponding
 to $w$.    
 
 We conclude that the maps from $\mathrm{Spec}(A) - D$ to $\mathbb{P}^h_k$ which
 are defined by $$(t_1,t_2) \to (q_0:\cdots:q_h)$$ and $$(t_1,t_2) \to (g_0:\cdots:g_h)$$ agree.  
 We know that the $g_i$ are ratios of polynomials in $k[t_1,t_2]$ of bounded degree, $pg_i \in k[t_1,t_2]$
 for a non-zero $p = p(t_1,t_2) \in k[t_1,t_2]$ of bounded degree, and that 
  the $q_i$ are elements of $A$ with the property that no irreducible element of 
 $A$ divides all of the $q_i$.  Here $A$ is a UFD with irreducibles equal to the irreducibles
 in $k[t_1,t_2]$ which have zero constant term, and every irreducible in $k[t_1,t_2]$
 with non-zero constant term is a unit in $A$.  
 
 By multiplying all of the $q_i$ by a suitable
 unit in $A$, we can assume that all the $q_i$ are elements of $k[t_1,t_2]$. We can furthermore
 write 
 $$(g_0,\ldots,g_h) = p^{-1}\cdot ( \ell_0,\ldots,\ell_h)$$
 where $p, \ell_0,\ldots \ell_h$ in $k[t_1,t_2]$ have bounded degrees. 
 We conclude that there must be a non-zero element
 $H$ of $\mathrm{Frac}({A}) = k(t_1,t_2)$ such that 
 \begin{equation}
 \label{eq:equals}
 H\cdot (q_0,\ldots,q_h) = (g_0,\ldots,g_h) = p^{-1}\cdot (\ell_0,\ldots,\ell_h)
 \end{equation}
 as tuples of elements of $\mathrm{Frac}({A})$.  
 
 Let $\Omega$ be the finite set of  irreducible elements $\pi$ of $k[t_1,t_2]$
 which divide either $p$ or one of $\ell_0,\ldots,\ell_h$. If $\pi$ is an irreducible
 of $k[t_1,t_2]$ which is not in $\Omega$, then $p^{-1} \ell_j = H q_j$ has valuation zero at
 the discrete valuation $\mathrm{ord}_\pi$ of $k(t_1,t_2)$ associated to $\pi \in k[t_1,t_2]$.  
 Hence 
 \begin{equation}
 \label{eq:edef}
-\mathrm{ord}_\pi(H) = \mathrm{ord}_\pi(q_0) = \cdots = \mathrm{ord}_\pi(q_h) \quad \mathrm{if}\quad \pi \not \in \Omega.
 \end{equation}
 If $\pi$ has non-zero constant term, then it is a unit in $A$, and we can
  multiply each of the $q_i$ by $\pi^{\mathrm{ord}_\pi(H) }$
  to be able to assume without loss of generality that $\mathrm{ord}_\pi(q_i)  = 0$.  If $\pi$ has zero constant term, then 
  $\pi$  defines an
  irreducible of $A$, and we know that no irreducible element of $A$ divides every one of 
  $q_0,\ldots,q_h \in A$.  So for $\pi$ with zero constant term such that $\pi \not \in \Omega$
  we conclude that $\mathrm{ord}_\pi(q_j) = 0$ for some $j$, so (\ref{eq:edef}) shows
  $\mathrm{ord}_\pi(q_i)  = 0$ for all $i$. We conclude that without loss of generality, we can assume that the
  only irreducible elements $\pi$ of $k[t_1,t_2]$ which occur in the factorizations of 
  $H$ or in one of the $q_j$ are $\pi$ lying in $\Omega$.    
  
  Suppose now that $\pi \in \Omega$. Then $\mathrm{ord}_\pi(p)$ and $\mathrm{ord}_\pi(\ell_j)$ are bounded for all $j$.
We conclude from (\ref{eq:equals}) that $\mathrm{ord}_\pi(H q_j)$ is bounded 
independently of $j$, so it follows that there is a bound on $|\mathrm{ord}_\pi(q_j) - \mathrm{ord}_\pi(q_i)|$
for all $i$ and $j$.  Suppose now that $\pi \in \Omega$ has no constant term, so that $\pi$ defines
an irreducible in $A$.  Then as above, we conclude that $\mathrm{ord}_\pi(q_j) \ge 0$ with
equality for at least one $j$.  It follows that we have an effective upper bound on $\mathrm{ord}_\pi(q_i)$
for all $i$ in this case.  Suppose now that $\pi \in \Omega$ has non-zero constant term, so that it is a unit
in $A$.  Then since $|\mathrm{ord}_\pi(q_j) - \mathrm{ord}_\pi(q_i)|$ is bounded, we can
multiply all of the $q_j$ by a suitable power of $\pi  \in A^*$ to be able
to assume that $\mathrm{ord}_\pi(q_j) \ge 0$ for all $j$ and that we have an upper bound on $\mathrm{ord}_\pi(q_j)$
for all $j$.  

  We conclude that without loss of generality, we may assume that all of the $q_j$ are in $k[t_1,t_2]$,
  that the only irreducibles in $k[t_1,t_2]$ up to associates which divide any of the $q_j$ lie in the finite set $\Omega$,
  and that the powers to which these irreducibles divide the $q_j$ are bounded.  Since
  the degrees of the elements of $\Omega$ are bounded, this implies that all the $q_j$ are now
  polynomials of bounded degree.  We know that there is no irreducible in $k[t_1,t_2]$ which
  has zero constant term and which divides all of the $q_j$.   Thus no irreducible in $A$
  divides all of the $q_j$. 
  
  Let $I$ be the ideal of $A$ generated by $q_0,\ldots,q_h$.    We will
  show
  that $A/I$ has finite bounded length, where the only simple module for the local ring $A$
  is its residue field $k$.   Let $b(q_i)$ be the sum of the exponents of the irreducibles in $A$ appearing
  in the factorization of $q_i$, where $b(q_i) = 0$ if $q_i$ is a unit.  
  We will use induction on $\beta_0= \mathrm{min}(b(q_0),\ldots,b(q_h))$,
  where $\beta_0$ is bounded.  If $\beta_0 = 0$ then some $q_i$ is a unit, and $A/I = \{0\}$.
  Suppose now that $\beta_0  > 0$.  After renumbering the $q_i$, we can suppose that $\beta_0 = b(q_0)$.  Let  $\pi \in k[t_1,t_2]$ be an irreducible in $A$ which divides $q_0$.  
  Let $I'$ be the ideal generated by $q_0/\pi, q_1,\ldots,q_h$.  
  We have an exact sequence
  \begin{equation}
  \label{eq:exact1}
  0 \to I'/I \to A/I \to A/I' \to 0
  \end{equation}
  Since $\pi$ is an irreducible in $A$ which divides $q_0$, there must be some $j >0$
  such that $\pi$ does not divide $q_j$.  
  Thus we get a surjection
  $$A/(A\pi + A q_j) \xrightarrow{\delta} I'/I \xrightarrow{} 0$$
  in which $\delta([\alpha]) = \alpha q_0/\pi$ mod $I$.
  Since $\pi \in k[t_1,t_2]$ divides $q_0$,  
  $\pi$ has bounded degree, as does $q_j$. The length
  of $A/(A\pi + A q_j)$ is the local intersection number of the divisors associated to $\pi$ and
  to $q_j$, and this length is a bounded function of the degrees of $\pi$ and $q_j$.  Since
  the length of $A/I'$ is finite and bounded by induction, we conclude from   (\ref{eq:exact1})
  that the length of $A/I$ is finite and bounded.
 
 We can now filter $A/I$ by the images of powers of the maximal ideal $m_A$ of $A$.
 By Nakayama's Lemma, this filtration is strictly decreasing until it reaches $\{0\}$.
 Thus $I$ contains a bounded power of $m_A$, and we are done with Step 1.
 \hfill $\Box$
  
  \vskip .2in

\noindent {\bf Step 2:} 
\textit{As in Step 1, let $w$ be an element of the finite set $D$ of closed points of 
$\mathbb{P}^1_k \times \mathbb{P}^1_k$
at which the rational map 
$\xymatrix{\psi:\mathbb{P}^1_k \times \mathbb{P}^1_k \ar@{-->}[r]& \mathbb{P}^h_k}$ is not defined.
Define $A_w = \mathcal{O}_{\mathbb{P}^1_k \times \mathbb{P}^1_k,w}$. 
By Step 1, the restriction of $\psi$ to $\mathrm{Spec}(A_w) - \{w\}$
is defined by polynomials $q_0 = q_0(t_1,t_2),\ldots,q_h = q_h(t_1,t_2)$ of bounded degree 
in the polynomial ring $k[t_1,t_2]$ associated to a pair of local parameters $t_1, t_2$ at $w$,
where $A_w = k[t_1,t_2]_{(t_1,t_2)}$.  
Let $I_w$ be the ideal of $A_w $ generated by the $q_j$, so that $I_w$ contains a bounded
power of the maximal ideal of $A_w$ by Step 1.  Let $\mathcal{F}$ be the coherent sheaf of ideals 
on $\mathbb{P}^1_k \times \mathbb{P}^1_k$ which has stalk $\mathcal{O}_{\mathbb{P}^1_k \times \mathbb{P}^1_k,z}$
at each point $z$ not in $D$, and whose stalk at $w \in D$ is $I_w$.  Define $B$ to be the blow-up
of $\mathbb{P}^1_k \times \mathbb{P}^1_k$ at $\mathcal{F}$. Then there is a canonical projective
birational morphism $\theta:B \to \mathbb{P}^1_k \times \mathbb{P}^1_k$ which induces an isomorphism
on the complement of $D$. For $w \in D$, the inverse image $\theta^{-1}(w)$ is a connected one dimensional scheme.  Finally,
there is a morphism $\tilde {\psi}:B \to \mathbb{P}^h_k$ which resolves the birational
map 
$\xymatrix{\psi:\mathbb{P}^1_k \times \mathbb{P}^1_k \ar@{-->}[r]& \mathbb{P}^h_k}$ in the sense that
$\tilde \psi(b) = \psi(\theta(b))$ if $\theta(b)  \not \in D$ and  $\tilde {\psi}(B)$ is a closed subset
of $\mathbb{P}^h_k$ which coincides with the orbit closure $\overline{\mathrm{Aut}_\Lambda(P) \cdot C}$
when we give each of these sets the reduced induced structure.}
\begin{figure}[ht] \caption{\label{fig:Step2} Illustration of Step 2.}
$$\xymatrix{B\ar[d]_(.45){\theta}\ar[drrr]^{\tilde{\psi}}\\ 
\mathbb{P}^1_k \times \mathbb{P}^1_k\ar@{-->}[rrr]^(.45)\psi&&&
\overline{\mathrm{Aut}_\Lambda(P) \cdot C}\;\ar@{^(->}[r]&
\mathbb{P}^h_k
}$$
\end{figure}
\medbreak

\noindent {\bf Proof of Step 2:} 
Let $U = (\mathbb{P}^1_k \times \mathbb{P}^1_k) - D$, so that $\psi$ is defined on $U$
and $\mathbb{A}^1_k \times \mathbb{A}^1_k$ is an open dense subset of $U$.  If $z \in \psi(U)$
and $V$ is an open subset of $\mathbb{P}^h_k$ containing $z$, then $\psi^{-1}(V)$ is an open 
neighborhood of $z$ in $U$. Then $\psi^{-1}(V) \cap (\mathbb{A}^1_k \times \mathbb{A}^1_k) \ne \emptyset$
and $V$ contains a point of the orbit $\psi(\mathbb{A}^1_k \times \mathbb{A}^1_k) = \mathrm{Aut}_\Lambda(P). C$.
Thus $\psi(U)$ is contained in the orbit closure $\overline{\mathrm{Aut}_\Lambda(P). C}$,
so the closure $\overline{\psi(U)}$ of $\psi(U)$ equals  $\overline{\mathrm{Aut}_\Lambda(P). C}$.

Let $G \subset U \times \mathbb{P}^h_k$ be the graph of $\psi$ on $U$.  Here $U \times \mathbb{P}^h_k$ is
an open subset of $(\mathbb{P}^1_k \times \mathbb{P}^1_k) \times \mathbb{P}^h_k$.  Then $B
= \mathrm{Proj}(\oplus_{n = 0}^\infty \mathcal{F}^n)$ is the closure 
of $G$ in $(\mathbb{P}^1_k \times \mathbb{P}^1_k) \times \mathbb{P}^h_k$. 

Since $\oplus_{n = 0}^\infty I_w^{\;n}$ is an integral domain, $B$ is an integral surface with a proper birational
 morphism $\theta:B \to \mathbb{P}^1_k \times \mathbb{P}^1_k$ that is an isomorphism away from $D$.   Namely, $\theta$ is the restriction to $B$ of the first projection morphism
$\pi_1:(\mathbb{P}^1_k \times \mathbb{P}^1_k) \times \mathbb{P}^h_k \to \mathbb{P}^1_k \times \mathbb{P}^1_k$. 
 The 
 stalk $\mathcal{F}_w$ of $\mathcal{F}$ at each $w \in D$ is contained in the maximal ideal $m_{A_w}$ of $A_w = \mathcal{O}_{\mathbb{P}^1_k \times \mathbb{P}^1_k,w}$, and $\mathcal{F}_w$ contains a positive power of $m_{A_w}$.  We claim this implies
 $\mathcal{F}_w/m_{A_w} \mathcal{F}_w$ has dimension at least $2$.  If not, then $\mathcal{F}_w/m_{A_w} \mathcal{F}_w$ has dimension $1$ by Nakayama's Lemma, from which it follows that  $\mathcal{F}_w$ is a principal $A_w$-ideal not equal to $A_w$. This is impossible
 because $A_w/\mathcal{F}_w$ has finite dimension over $k$.  Now $\theta^{-1}(w) = 
 \mathrm{Proj}(\oplus_{n = 0}^\infty (I_w^{\;n} /m_{A_w} I_w^{\;n}))$ has at least two
 points. Since $\mathbb{P}^1_k \times \mathbb{P}^1_k$ is normal, $\theta^{-1}(w)$ is connected by Zariski's Main Theorem. It follows that $\theta^{-1}(w)$ is a connected union of  finitely many possibly non-reduced curves.

We define
$\tilde{\psi}:B \to \mathbb{P}^h_k$ to be the restriction to $B$ of the second  projection morphism
$\pi_2:(\mathbb{P}^1_k \times \mathbb{P}^1_k) \times \mathbb{P}^h_k \to \mathbb{P}^h_k$.  Clearly
$\tilde{\psi}(B) = \pi_2(B) = \pi_2(\overline{G})$ contains $\pi_2(G) = \psi(U)$.  Because $\pi_2$
is projective, $\pi_2(B)$ is closed, so 
\begin{equation}
\label{eq:firstc}
\overline{\psi(U)} = \overline{\mathrm{Aut}_\Lambda(P). C} \subset \pi_2(B) = \tilde{\psi}(B).
\end{equation}

Suppose $z \in \mathbb{P}^h_k$ does not lie in
 $\overline{\mathrm{Aut}_\Lambda(P) . C} = \overline{\psi(U)}$.  Then there is an open
neighborhood $V$ of $z$ in $\mathbb{P}^h_k$ so $V \cap \psi(U) = \emptyset$. 
Therefore $\pi_2^{-1}(V)$ is an open subset of $(\mathbb{P}^1_k \times \mathbb{P}^1_k) \times \mathbb{P}^h_k$ which
contains no element of the graph $G$ of $\psi$ on $U$.  Hence $\pi_2^{-1}(z)$ is not in the closure $B$ of
$G$, so $z \not \in \pi_2(B) = \tilde{\psi}(B)$. Taking contrapositives, we have shown 
\begin{equation}
\label{eq:secondc}
 \tilde{\psi}(B) \subset \overline{\psi(U)} = \overline{\mathrm{Aut}_\Lambda(P). C}.
\end{equation} 
Combining (\ref{eq:firstc}) and (\ref{eq:secondc}) shows 
$\tilde{\psi}(B) = \overline{\mathrm{Aut}_\Lambda(P) . C}$ so we are done with Step 2.
\hfill $\Box$

 \vskip .2in

\noindent {\bf Notation for Step 3:} 
Fix $w \in D$ and let $t_1$ and $t_2$ be uniformizing parameters in $A=A_w = \mathcal{O}_{\mathbb{P}^1_k \times \mathbb{P}^1_k,w}$.  Thus $A = k[t_1,t_2]_{(t_1,t_2)}$ and   
$I = \mathcal{F}_w = 
Aq_0 + \cdots +Aq_h$ for some polynomials $q_0,\ldots,q_h \in k[t_1,t_2]$ of bounded degree with constant term $0$.  We know from Step 1 that $I$ contains a bounded power of the maximal ideal $m_A$ of $A$, so
$\mathrm{dim}_k(A/I) = a$ is bounded. Thus $I$ defines a closed point on the reduction  $\mathcal{G} = \mathcal{G}_a$
of the Grassmannian 
of all ideals of $A$ of codimension $a$.  

Let $\mathfrak{X} = \mathbb{P}^2_\mathcal{G}$ be the projective plane over $\mathcal{G}$
with structure sheaf $\mathcal{O}_\mathfrak{X}$.  For all $I \in \mathcal{G}$, let $k(I)$ be the residue field 
of $I$. By the construction of the Grassmannian, $I$ is associated to an ideal, which we will also
denote by $I$, in $k(I)[t_1,t_2]_{(t_1,t_2)}$. Note that $I$ is a closed point if and only if $k(I) = k$.
The fiber $\mathfrak{X}_I$ is $\mathbb{P}^2_{k(I)}$, which contains the 
affine plane $\mathbb{A}^2_{k(I)} = \mathrm{Spec}(k(I)[t_1,t_2])$
with origin $z_0$ defined by $t_1 = t_2 = 0$.  Let $\mathcal{I}_I \subset \mathcal{O}_{\mathfrak{X}_I}$ be 
the sheaf of ideals
which has stalk  $\mathcal{O}_{\mathfrak{X}_I,z}$ at $z \in \mathbb{P}^2_{k(I)} - \{z_0\}$ and stalk at 
$z_0 = (0,0)$ given by
the ideal  $I$ in $k(I)[t_1,t_2]_{(t_1,t_2)}$.  
Define  $\mathcal{J} \subset \mathcal{O}_\mathfrak{X}$ to be the sheaf of ideals which is generated by 
the inverse images of $\mathcal{I}_I\subset \mathcal{O}_{\mathfrak{X}_I}$ under the reduction maps 
$\mathcal{O}_\mathfrak{X} \to k(I) \otimes_{\mathcal{O}_\mathcal{G}} \mathcal{O}_\mathfrak{X} = 
\mathcal{O}_{\mathfrak{X}_I}$
associated to each  $I \in \mathcal{G}$.  Define $\mathfrak{B}$ to be the blow-up of
$\mathfrak{X}$ along the sheaf of ideals $\mathcal{J}$.  

The natural morphism $\xi:\mathfrak{B} \to \mathfrak{X} = \mathbb{P}^2_\mathcal{G}$
is projective and an isomorphism away from the zero section 
$s_0:\mathcal{G} \to  \mathfrak{X} = \mathbb{P}^2_{{\mathcal{G}}}$, which is
the section of $\mathfrak{X} = \mathbb{P}^2_{{\mathcal{G}}} \to {\mathcal{G}}$ defined by the origin on $\mathbb{A}^2_{{\mathcal{G}}} \subset \mathfrak{X}$.  
The fiber of $\xi$ over
$I \in \mathcal{G}$ is the blow-up $B_I \to \mathbb{P}^2_{k(I)}$ of $\mathbb{P}^2_{k(I)}$ at the ideal which
is the structure sheaf of $\mathbb{P}^2_{k(I)}$ outside of the origin on $\mathbb{A}^2_{k(I)}$ and the ideal  $I$ at the origin.  By a monoidal transformation
of a regular surface over a (possibly not algebraically closed) field, we will mean a blow-up of the surface at the maximal ideal defined by a closed point.
\vskip .1in

\noindent \textbf{Step 3:} \textit{Let $\mathcal{G}$ be as above. To prove Theorem \ref{thm:main}, it will suffice to show that there is a uniform bound (which may depend on $\mathcal{G}$)  on the
number of successive monoidal transformations one must perform, in order to birationally transform $\mathbb{P}^2_{k(I)}$ to a smooth rational surface which has a projective birational morphism to $B_I$,  as $I$ varies over all 
points of $\mathcal{G}$. }  
\medbreak

\noindent {\bf Proof of Step 3:}
By Step 1, the set $D$ is a finite set of closed points, and the number of points in $D$ is bounded 
(always in terms of $d' = \mathrm{dim}_k(C)$). Suppose there is a uniform bound of the kind described in 
Step 3 for each point $w \in D$ when we take $I = \mathcal{F}_w$ as in the notation just prior to Step 3, 
so that $I$ is an ideal of the local ring $A=A_w$ of $w$ on $\mathbb{P}^1_k \times \mathbb{P}^1_k$ 
which contains a bounded power of the maximal ideal. We patch together the resulting
sequences of monoidal transformations as $w$ varies over $D$.  Because the number of points in $D$ is 
bounded,
and the number of monoidal transformations needed for each point $w$ of $D$ is also bounded by the 
hypothesis in Step 3, we have the following
conclusion.  There is a bound
 on the number of successive monoidal transformations
one must make, beginning with $\mathbb{P}^1_k \times \mathbb{P}^1_k$, to arrive at a smooth rational surface $S$
which has a projective birational morphism to the blow-up $B$ described in Step 2. Let
$\mu:S \to \mathbb{P}^1_k \times \mathbb{P}^1_k$ be this morphism.

There is a projective birational morphism from $B$ to the orbit closure $\overline{\mathrm{Aut}_\Lambda(P) . C}$.
So we arrive at such a morphism from $S$ to $\overline{\mathrm{Aut}_\Lambda(P) . C}$. Thus
there is a projective birational morphism from $S$ to the minimal desingularization 
$\overline{\mathrm{Aut}_\Lambda(P) . C}^\dagger$ of the
normalization $\overline{\mathrm{Aut}_\Lambda(P) . C}^\#$ of $\overline{\mathrm{Aut}_\Lambda(P) . C}$.
Suppose $\overline{\mathrm{Aut}_\Lambda(P) . C}^\dagger \to X$ is a projective birational
morphism to a relatively minimal smooth rational projective surface $X$, so that $X$ is isomorphic to $\mathbb{P}^2_k$,
$\mathbb{P}^1_k \times \mathbb{P}^1_k$ or to one of the surfaces $X_n$ with $n \ge 2$.
We obtain a projective birational morphism $\tilde{\mu}:S \to X$ which is a composition of an unknown number of blow-downs
of rational curves of self-intersection $-1$.  
We obtain the diagram in Figure \ref{fig:Step3} (see also Figure \ref{fig:Step2}).
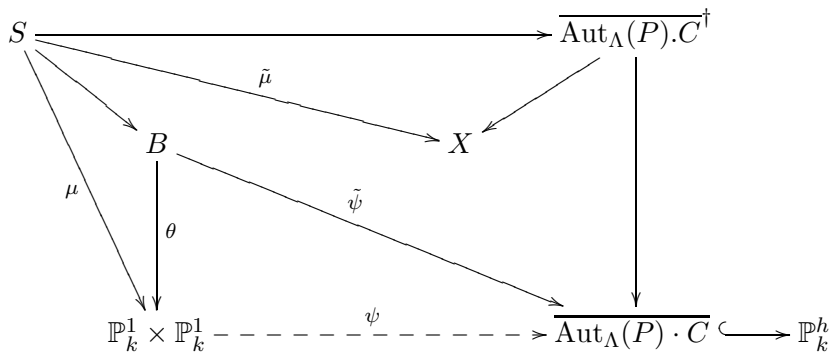
\begin{figure}[ht] \caption{\label{fig:Step3} Illustration of the proof of Step 3.}
$$\xymatrix{
S\ar[dr]\ar[rrrrr]\ar[dddr]_{\mu}\ar[drrrr]^(.55){\tilde{\mu}}&&&&&\overline{\mathrm{Aut}_\Lambda(P) . C}^\dagger\ar[ld]\ar[ddd]\\
&B\ar[dd]^(.45){\theta}\ar[ddrrrr]^(.4){\tilde{\psi}}&&&X\\ \\
&\mathbb{P}^1_k \times \mathbb{P}^1_k\ar@{-->}[rrrr]^(.45)\psi&&&&
\overline{\mathrm{Aut}_\Lambda(P) \cdot C}\;\ar@{^(->}[r]&
\mathbb{P}^h_k
}$$
\end{figure}

By \cite[Prop. V.3.2]{Hartshorne}, the rank of the Picard group of a smooth projective
surface increases by $1$ when one performs a monoidal transformation.  Hence the rank of $\mathrm{Pic}(S)$
is bounded because $\mu:S \to \mathbb{P}^1_k \times \mathbb{P}^1_k$ involves a bounded number
of blow-downs. This implies there is a bound on the number of blow-downs needed to factor
$\tilde{\mu}: S \to X$.  

We define an irreducible effective smooth curve $L$ on $X$
in the following way.  If $X = \mathbb{P}^2_k$ let $L$ be any projective line on $X$,
so that $L$ has self-intersection $L \cdot L = 1$.  If $X = \mathbb{P}^1_k \times \mathbb{P}^1_k$,
let $L = \mathbb{P}^1_k \times \{x\}$ for a point $x \in \mathbb{P}^1_k$, so that $L \cdot L = 0$.  Finally,
if $X = X_n$ for some $n \ge 2$, there is by \cite[Example V.2.11.3]{Hartshorne} a smooth
curve $L$ on $X_n$ such that $L \cdot L = -n$. 

By \cite[Prop. 3.2]{Hartshorne},
if $S'' \to S'$ is a monoidal transformation of smooth projective surfaces, and $L'$ is a smooth curve on 
$S'$ with proper transform $L''$ on $S''$, then either $L'' \cdot L'' = L' \cdot L'$
or $L'' \cdot L'' = L' \cdot L' + 1$.  Let $L_S$ be the proper transform of $L$  under $\tilde{\mu}:S \to X$,
and suppose $\tilde{\mu}$ can be factored into $\tilde{c}$ blow-downs, where we have bounded $\tilde{c}$.
Then  $(L_S) \cdot (L_S) \le L \cdot L + \tilde{c}$. 

We claim that it is impossible that $X = X_n$ for an $n > \tilde{c}+1$.  For otherwise,
$(L_S) \cdot (L_S) \le L \cdot L + \tilde{c} = -n + \tilde{c} < -1.$  If a curve is sent to a 
point by a blow-down, then its self-intersection must be $-1$, and if it is not
sent to a point then its self-intersection cannot increase. Hence we
would have that the image $L_0$ of $L_S$ under $\mu:S \to \mathbb{P}^1_k \times \mathbb{P}^1_k$
would be an effective curve of self-intersection $L_0\cdot L_0 < -1$. There are no such
curves, since the self-intersection pairing is non-negative on the cone of effective
curves in $\mathbb{P}^1_k \times \mathbb{P}^1_k$. This shows that the $n$ for
which $X$ could be isomorphic to $X_n$ are bounded. Since we have also bounded
the number of blow-downs needed to factor the morphism $\tilde{\mu}:S \to X$, this completes
Step 3.  \hfill $\Box$

\vskip .2in

\noindent{\bf Step 4:} 
\textit{Let $\xi:\mathfrak{B} \to \mathfrak{X} = \mathbb{P}^2_{\mathcal G}$ be as in Step 3. Let $\mathcal{C}$
be an irreducible component of $\mathcal{G}$, and let $\mathfrak{B}_{\mathcal{C}}$ be the pullpack of $\mathcal{G}$
to ${\mathcal{C}}$.  Define $s: {\mathcal{C}} \to \mathfrak{X}_{\mathcal{C}} = \mathbb{P}^2_{{\mathcal{C}}}$
to be the section of $\mathfrak{X}_{\mathcal{C}} = \mathbb{P}^2_{{\mathcal{C}}} \to {\mathcal{C}}$ defined by the origin on $\mathbb{A}^2_{{\mathcal{C}}} \subset \mathfrak{X}_{\mathcal{C}}$.  
Define $\mathfrak{B}_{\mathcal{C}}^\sharp$ to be the normalization of $\mathfrak{B}_{\mathcal{C}}$. Then $\mathfrak{B}_{\mathcal{C}}^\sharp \to \mathfrak{B}_{\mathcal{C}}$
is finite and an isomorphism off of  $\xi^{-1}(s(\mathcal{C}))$, where 
$s(\mathcal{C})$ is the image of the section $s: {\mathcal{C}} \to \mathfrak{X}_{\mathcal{C}}$.  The fiber $(\mathfrak{B}_{\mathcal{C}}^\sharp)_{\eta_{\mathcal{C}}}$ of $\mathfrak{B}_{\mathcal{C}}^\sharp$ over the generic point $\eta_{\mathcal{C}}$ of ${\mathcal{C}}$  is a normal projective surface over the residue field $k(\eta_{\mathcal{C}})$.}

\medbreak

\noindent {\bf Proof of Step 4:}  The local rings of $\mathfrak{B}_{\mathcal{C}}^\sharp$ are integrally closed in 
their fraction fields. For points $z$ on  $(\mathfrak{B}_{\mathcal{C}}^\sharp)_{\eta_{\mathcal{C}}}$, the local
ring of $z$ on $\mathfrak{B}_{\mathcal{C}}^\sharp$ is equal to the local ring of $z$ on $(\mathfrak{B}_{\mathcal{C}}^\sharp)_{\eta_{\mathcal{C}}}$.  Thus
$(\mathfrak{B}_{\mathcal{C}}^\sharp)_{\eta_{\mathcal{C}}}$ is a normal projective surface, and the rest of Step 4 is clear.
\hfill $\Box$

\vskip .2in

\noindent {\bf Step 5:} \textit{In this step we show that there is a uniform bound as specified in Step 3, 
which will complete the proof of Theorem \ref{thm:main}.  
The plan is to apply a Theorem of Zariski to the general fiber 
$(\mathfrak{B}_{\mathcal{C}}^\sharp)_{\eta_{\mathcal{C}}}$ of $\mathfrak{B}_{\mathcal{C}}^\sharp$.
This produces a bound of the kind required in Step 3 when the point $I$ of $\mathcal{G}$ is the generic 
point
of the irreducible component ${\mathcal{C}}$ of $\mathcal{G}$.  We then show how to extend the 
monoidal transformations
involved over an open dense subset of ${\mathcal{C}}$, to be able to handle all $I$ in such a subset.  
Since there
are a bounded number of irreducible components ${\mathcal{C}}$ of $\mathcal{G}$, this provides us 
with a bound of the required
kind for  all $I$
in an open dense subset of $\mathcal{G}$.  Finally, we apply Noetherian
induction to deal with all $I$ in $\mathcal{G}$.}
\medbreak

\noindent {\bf Proof of Step 5:} To begin the proof, 
we recall a Theorem of Zariski concerning birational
projective morphisms $f:Z' \to Z$ between projective normal surfaces over a field $F$.
Such morphisms are called modifications by Artin in \cite{Artin}.  

A normalized monoidal tranformation of $Z$ is a morphism $Z_1 \to Z$ in which $Z_1$
is the normalization of the monoidal transformation of $Z$ at the maximal ideal of a closed point.
Zariski proves that every modification $f:Z' \to Z$ is dominated by a 
morphism $g:Z'' \to Z$ which is formed as the composition of a finite series
of normalized monoidal transformations.  Here domination means that there is a birational
morphism $h:Z'' \to Z'$ compatible with $f:Z' \to Z$ and $g:Z'' \to Z$.  

With the notations of Step 4 we now apply Zariski's Theorem to the morphism
$$f:Z' = (\mathfrak{B}_{\mathcal{C}}^\sharp)_{\eta_{\mathcal{C}}} \to Z = \mathbb{P}^2_{\eta_{\mathcal{C}}}$$ with
$F = k(\eta_{\mathcal{C}})$.  We obtain a birational morphism $$g:Z'' \to Z$$ of normal projective
surfaces over $k(\eta_{\mathcal{C}})$ which is the composition
of a finite number of normalized monoidal transformations together with a birational
morphism $h:Z'' \to Z'$ which is compatible with $f$ and $g$.  Since 
$Z = \mathbb{P}^2_{\eta_{\mathcal{C}}}$ is regular, and monoidal tranformations
preserve regularity, we never have to normalize any of the results of the
monoidal transformations used in constructing $g:Z'' \to Z$.  

Define 
$\mathcal{Z} = \mathbb{P}^2_{\mathcal{C}}$, so that the generic fiber of $\mathcal{Z}$ over
${\mathcal{C}}$ is $Z$.  Take a closed point $Q$ on $Z$,
and let $\overline{Q}$ be the Zariski closure of $Q$ in $\mathcal{Z}$.
The scheme $\overline{Q}$ need not be reduced, since $Q$ might
have residue field $k(Q)$ which is a finite inseparable extension 
of $k(\eta_{\mathcal{C}})$. But if we let $\overline{Q}^{red}$ be the reduction
of $\overline{Q}$, then $\overline{Q}^{red}$ is finite over ${\mathcal{C}}$
whose fibers over each point $I$ of ${\mathcal{C}}$ are a disjoint union of reduced
points which are closed in the fiber over $I$.  The number of such points in each fiber is 
bounded by the relative degree $[k(Q):k(\eta_{\mathcal{C}})]$.  

Let
$\mathcal{Z}_1$ be the blow-up of $\mathcal{Z}$ at the
sheaf of ideals which defines $\overline{Q}^{red}$.  
Since $Q$ is a closed point on the general fiber $Z$
of $\mathcal{Z}$, $\overline{Q}^{red}$ has general
fiber $Q$.  We conclude that $\mathcal{Z}_1$ has as general fiber the
blow-up $Z_1$ of $Z$ at $Q$. Furthermore, each 
fiber of $\mathcal{Z}_1$ over ${\mathcal{C}}$ is the blow-up
of the fiber of $\mathcal{Z}$ at the disjoint union of a
finite number of reduced closed points, and the number
of such points is uniformly bounded by $[k(Q):k(\eta_{\mathcal{C}})]$.
We conclude that each fiber of $\mathcal{Z}_1$ over ${\mathcal{C}}$
results from a number of monoidal transformations
of the corresponding fiber of $\mathcal{Z}$ over ${\mathcal{C}}$, with the number
of transformations involved being bounded by $[k(Q):k(\eta_{\mathcal{C}})]$.

We apply this process to each of the monoidal transformations
involved in producing the morphism $g:Z'' \to Z$.  We conclude
that $g$ is the general fiber over ${\mathcal{C}}$ of a projective
birational morphism $\tilde{g}:\mathcal{Z}'' \to \mathcal{Z} = \mathbb{P}^2_{\mathcal{C}}$
which has the following property.  For each point $I$ of ${\mathcal{C}}$, the fiber
$\tilde{g}_I:\mathcal{Z}''_I \to \mathcal{Z}_I = (\mathbb{P}^2_{{\mathcal{C}}})_I=
\mathbb{P}^2_{k(I)}$ of $\tilde{g}$ at $I$ is 
an isomorphism on the complement of
the origin of $\mathbb{A}^2_{k(I)} \subset \mathbb{P}^2_{k(I)}$, and $\tilde{g}_I$ is 
the composition of a bounded number of
monoidal tranformations.    

Recall now that we have a proper birational morphism $$h:Z'' \to  
Z' = (\mathfrak{B}_{\mathcal{C}}^\sharp)_{\eta_{\mathcal{C}}}$$
of normal surfaces over $k(\eta_{\mathcal{C}})$
which gives a commutative diagram 
$$\xymatrix{
Z''\ar[rd]_g\ar[rr]^h&&Z' = (\mathfrak{B}_{\mathcal{C}}^\sharp)_{\eta_{\mathcal{C}}}\ar[ld]^f\\
&Z  = \mathbb{P}^2_{\eta_{\mathcal{C}}}}$$
Here $h$ defines an isomorphism between dense open subsets of $Z''$ and $Z'$.

Let $\tau:\mathfrak{B}_{\mathcal{C}}^\sharp \to \mathfrak{B}_{\mathcal{C}}$ be the (finite) normalization morphism. 
We will use a subscript $M$ to denote the pullback  of a scheme or a morphism between schemes
over a subscheme $M$ of ${\mathfrak{B}}$. 
Then $f_{\eta_{\mathcal{C}}}:(\mathfrak{B}_{\mathcal{C}}^\sharp)_{\eta_{\mathcal{C}}} \to \mathbb{P}^2_{\eta_{\mathcal{C}}}$ is the composition
$\xi_{\eta_{\mathcal{C}}} \circ \tau_{\eta_{\mathcal{C}}}$ where $\xi:\mathfrak{B} \to \mathbb{P}^2_{{\mathcal{G}}}$ is as in the notation stated just prior to  Step 3.

By considering the denominators of the coefficients in $k(\eta_{\mathcal{C}})$ of homogeneous
polynomials defining $h$, we see that there is a dense open affine subset $\mathrm{Spec}(R)$
of ${\mathcal{C}}$ such that $h$ extends to a morphism $$\tilde{h}_R:\mathcal{Z}''_R \to \mathfrak{B}^\sharp_R$$
where $\mathcal{Z}''_R$ is the pullback to $\mathrm{Spec}(R)$ of the
scheme $\mathcal{Z}''$ constructed above, and $\mathfrak{B}^\sharp_R$ is the 
pullback of $\mathfrak{B}^\sharp$ to $\mathrm{Spec}(R)$.  We can further shrink $\mathrm{Spec}(R)$
so that $\tilde{h}_R$
is compatible with the restriction
$$\tilde{g}_R:\mathcal{Z}''_R \to \mathcal{Z}_R = \mathbb{P}^2_R$$ of 
$\tilde{g}:\mathcal{Z}'' \to \mathcal{Z} = \mathbb{P}^2_{\mathcal{C}}$ over $\mathrm{Spec}(R)$
and with the restriction 
$$(\xi\circ \tau)_R:\mathfrak{B}_R^\sharp \to \mathbb{P}^2_{R}$$
of $\xi\circ \tau$ to $\mathrm{Spec}(R)$.

We conclude that for all points $I \in \mathrm{Spec}(R)$,
we have a birational morphism $$\tilde{h}_I: \mathcal{Z}''_I \to (\mathfrak{B}_{\mathcal{C}}^\sharp)_I$$
compatible with the bounded composition of mononoidal transformations
$$\tilde{g}_I:\mathcal{Z}''_I \to \mathbb{P}^2_{k(I)} = (\mathbb{P}^2_{{\mathcal{C}}})_I$$ and the 
birational morphism $$(\xi \circ \tau)_I:(\mathfrak{B}_{\mathcal{C}}^\sharp)_I \to \mathbb{P}^2_{k(I)} = (\mathbb{P}^2_{{\mathcal{C}}})_I$$
which is the composition of $\tau_I: (\mathfrak{B}_{\mathcal{C}}^\sharp)_I \to \mathfrak{B}_I $ with the blow-up
morphism $\xi_I:\mathfrak{B}_I \to  \mathbb{P}^2_{k(I)} = (\mathbb{P}^2_{{\mathcal{C}}})_I$.  Since
$\tau_I$ is an isomorphism on the complement of the one-dimensional fiber of $\mathfrak{B}_I$
over the origin in $\mathbb{A}^2_{k(I)} \subset \mathbb{P}^2_{k(I)}$, we know that
$\tau_I$ is birational. Thus the composition of $\tilde{h}_I$ with
$\tau_I$ is a birational morphsm $$\zeta_I:\mathcal{Z}_I'' \to \mathfrak{B}_I$$
compatible with $\tilde{g}_I$ and $\xi_I$.  Since $\tilde{h}_I$
is the composition of a bounded number of monoidal transformations, we
have now produced the bound required in Step 3 for all $I$
in the open dense subset $\mathrm{Spec}(R)$ of ${\mathcal{C}}$.

We now do the above construction for the bounded number of irreducible components ${\mathcal{C}}$ of $\mathcal{G}$.
This produces a bound of the kind needed in Step 3 for all $I$ in
an open dense subset of $\mathcal{G}$. The complement of these
$I$ is a closed subset of dimension strictly smaller than the dimension of
$\mathcal{G}$. Continuing by Noetherian induction on these subsets,
we use the fact that $\mathrm{dim}(\mathcal{G})$ is bounded
to conclude that we have a bound of the required kind which applies
to all points $I$ of $\mathcal{G}$.  In view of Step 3, this completes
the proof of Theorem \ref{thm:main}.
\hfill $\Box$

\begin{Remark}
\label{rem:moregeneral}
The proof of Theorem \ref{thm:main} can be easily modified to show the following generalization:

Suppose $T$ is simple as before, but the orbit $\mathrm{Aut}_\Lambda(P) . C$ of $C$ in $\grass$ is 
$\mathfrak{m}$-dimensional for some $\mathfrak{m}\ge 2$. Let $Z$ be any two-dimensional affine plane contained in  
$\mathrm{Aut}_\Lambda(P) . C$. Let $\overline{Z}^\dagger$ be the minimal desingularization of the 
normalization of the closure $\overline{Z}$ of $Z$ in $\grass$.

There is a bound $n_0$ which depends only on $k$ and $\mathrm{dim}_k(C)$ such that
there is a birational morphism from $\overline{Z}^\dagger$ to
a relatively minimal smooth rational projective surface which is either $\mathbb{P}^2_k$ or $X_n$ for some 
integer $0 \le n \le n_0$ with $n \ne 1$.  There is furthermore a bound depending on $k$ and $\mathrm{dim}_k(C)$
for the number of irreducible curves which are blown down to points by this morphism.
\end{Remark}

\section{Bounding Euler characteristics of orbit closures}
\label{sec:eulerbounds}
\setcounter{equation}{0}

In this section we assume the hypotheses and notations of Theorem \ref{thm:main}.
To prove Corollary \ref{cor:Euler} we must show that the Euler characteristic
$\chi(\overline{\mathrm{Aut}_\Lambda(P) . C})$ is bounded above by a function 
which depends only on $k$ and $\mathrm{dim}_k(C)$.  Here we define the
Euler characteristic $\chi(V)$ of an arbitrary separated open subscheme $V$
of a closed subscheme $\overline{V}$ 
of a projective space over $k$ to be the \'etale Euler characteristic with compact
support
$$\chi(V) = \sum_i (-1)^i  \cdot \mathrm{dim}_{\mathbb{Q}_\ell} H^i_c(V,\mathbb{Q}_\ell)$$
for any  prime $\ell$ different from the characteristic of $k$ (see \cite[p. 93]{Milne}).
By \cite[Remark 1.30]{Milne}, if $Z$ is a closed subscheme of $V$, there is a long
exact sequence
$$\cdots \to H^i_c(V-Z,\mathbb{Q}_\ell) \to H^i_c(V,\mathbb{Q}_\ell) \to H^i_c(Z,\mathbb{Q}_\ell) \to \cdots$$
This gives an equality of Euler characteristics 
\begin{equation}
\label{eq:basic}
\chi(V) = \chi(Z) + \chi(V - Z).
\end{equation}
If $\mathcal{T}$ is a constructible sheaf of $\mathbb{Q}_\ell$-vector spaces on $V$
for some prime $\ell$ different from the characteristic of $k$, we will denote
the compactly supported Euler characteristic of $\mathcal{T}$ by
$$\chi(V,\mathcal{T}) = \sum_i (-1)^i \cdot \mathrm{dim}_{\mathbb{Q}_\ell} H^i_c(V,\mathcal{T}).$$
Thus
$\chi(V) = \chi(V,\mathbb{Q}_\ell).$

We showed in the first paragraph of the proof of Step 3 of the proof of Theorem \ref{thm:main} 
the following fact.  Once Step 3 was completed, there is a smooth rational surface
$S$ together with morphisms $\mu:S \to \mathbb{P}^1_k \times \mathbb{P}^1_k$ and
$\nu:S \to \overline{\mathrm{Aut}_\Lambda(P) . C}$ with the following properties.
The morphism $\mu$ is a composition of a bounded number of monoidal transformations,
and it is an isomorphism over $\mathbb{A}^2_k = \mathbb{A}^1_k \times \mathbb{A}^1_k$.
The morphism $\nu$ is proper and birational.  On  $\mu^{-1}(\mathbb{A}^2_k)$, $\nu$ is 
equal to $\Psi\circ \mu$, where $\Psi:\mathbb{A}^2_k \to \mathrm{Aut}_\Lambda(P) . C$
is the isomorphism in (\ref{eq:psi}).
\begin{equation}
\xymatrix{
&S\ar[dl]_{\mu}\ar[dr]^{\nu}\\
\mathbb{P}^1_k \times \mathbb{P}^1_k&&\overline{\mathrm{Aut}_\Lambda(P) . C}\\
\mathbb{A}^2_k\ar@{^(->}[u]\ar[rr]^{\Psi}&&\mathrm{Aut}_\Lambda(P) . C\ar@{^(->}[u]
}
\end{equation}

We write $\mathbb{P}^1_k \times \mathbb{P}^1_k - \mathbb{A}^2_k = \Delta$ as the union
of the two rational curves $\infty \times \mathbb{P}^1_k$ and $\mathbb{P}^1_k \times \infty$
crossing transversely at $\infty \times \infty$.  By induction on the number $n$ of monoidal transformations
involved in factoring $\mu:S \to \mathbb{P}^1_k \times \mathbb{P}^1_k$, we see that
$\mu^{-1}(\Delta)$ is reduced and a tree of $n+2$ smooth $\mathbb{P}^1_k$-curves crossing transversely, with each point lying on at most two irreducible components.  The normalization $\mu^{-1}(\Delta)^\#$
is thus the disjoint union of $n+2$ copies of $\mathbb{P}^1_k$. Furthermore, the canonical
morphism $\mu^{-1}(\Delta)^\# \to \mu^{-1}(\Delta)$ is an isomorphism off the $r$ points
of $\mu^{-1}(\Delta)$ which lie on two distinct irreducible components.  Since $\mu^{-1}(\Delta)$
is a tree of $\mathbb{P}^1_k$-curves, we see by induction on the number $n+2$ of irreducible
components of this tree that $r = n+1$.  We therefore have from (\ref{eq:basic}) the Euler characteristic identities
$$\chi(S) = \chi(\mu^{-1}(\mathbb{A}^2_k)) + \chi(\mu^{-1}(\Delta)) =  1 + \chi(\mu^{-1}(\Delta))$$
and
\begin{eqnarray*}
 2n + 4  &=& (n+2) \cdot \chi(\mathbb{P}^1_k)\\
 & = & \chi(\mu^{-1}(\Delta)^\#) \\
 &=& \chi(\mu^{-1}(\Delta)) + n+1\,.
\end{eqnarray*}
Thus
$$\chi(S) = n+4.$$

In a similar way, we can write the orbit closure  $\overline{\mathrm{Aut}_\Lambda(P) . C}$
as the disjoint union of $\Psi(\mathbb{A}^2_k) \cong \mathbb{A}^2_k$ with the closed subset 
$\nu(\mu^{-1}(\Delta))$.  Here $\nu( \mu^{-1}(\Delta))$ is the connected union of 
some number $t$ of distinct rational curves which may be singular, and $t \le n+2$.
By the same reasoning above, we find that
\begin{equation}
\label{eq:orbitchi}
\chi(\overline{\mathrm{Aut}_\Lambda(P) . C}) = \chi(\mathbb{A}^2_k) + \chi(\nu(\mu^{-1}(\Delta))) = 1 + \chi(\nu(\mu^{-1}(\Delta))).
\end{equation}

To bound $\chi(\nu(\mu^{-1}(\Delta)))$ we use the fact that the normalization $\nu(\mu^{-1}(\Delta))^\#$ is the disjoint union of $t$ copies of $\mathbb{P}^1_k$. Therefore 
\begin{equation}
\label{eq:normaleasy}
\chi(\nu(\mu^{-1}(\Delta))^\#) = 2t.
\end{equation}
Let 
$\sigma:\nu(\mu^{-1}(\Delta))^\# \to \nu(\mu^{-1}(\Delta))$ be the canonical morphism.
We thus have an exact sequence of \'etale sheaves on $\nu(\mu^{-1}(\Delta))$ given by
$$0 \to \mathbb{Q}_\ell \to \sigma_* \mathbb{Q}_\ell \to \mathcal{T} \to 0$$
in which $\mathcal{T}$ is supported on the finitely many singular points of $\nu(\mu^{-1}(\Delta))$.
This gives the following equality of compactly supported Euler characteristics on $\nu(\mu^{-1}(\Delta))$:
\begin{eqnarray}
\label{eq:easyEulerequal}
\chi(\nu(\mu^{-1}(\Delta))) &=& \chi(\nu(\mu^{-1}(\Delta)),\mathbb{Q}_\ell)\nonumber\\
& = &\chi(\nu(\mu^{-1}(\Delta)), \sigma_* \mathbb{Q}_\ell) - \chi(\nu(\mu^{-1}(\Delta)), \mathcal{T}).
\end{eqnarray}
 
 Since $\sigma$ is finite, the higher derived functors $R^q \sigma_* $ vanish on $\mathbb{Q}_\ell$ for $q > 0$.
Therefore the spectral sequence $$H^p(\nu(\mu^{-1}(\Delta)),R^q \sigma_* \mathbb{Q}_\ell) \Rightarrow H^{p+q}(
\nu(\mu^{-1}(\Delta))^\#,\mathbb{Q}_\ell)$$ degenerates to give 
\begin{equation}
\label{eq:down}
 \chi(\nu(\mu^{-1}(\Delta)), \sigma_* \mathbb{Q}_\ell) = \chi(\nu(\mu^{-1}(\Delta))^\#,\mathbb{Q}_\ell) = \chi(\nu(\mu^{-1}(\Delta))^\#) = 2t .
 \end{equation}

Since the higher cohomology groups of $\mathcal{T}$ vanish, we have
\begin{equation}
\label{eq:h0}
\chi(\nu(\mu^{-1}(\Delta)), \mathcal{T}) = \mathrm{dim}_{\mathbb{Q}_\ell} H^0_c(\nu(\mu^{-1}(\Delta)),\mathcal{T}).
\end{equation}
Putting (\ref{eq:down}) and (\ref{eq:h0}) into (\ref{eq:easyEulerequal}) gives 
\begin{equation}
\label{eq:almostdonemaybe}
\chi(\nu(\mu^{-1}(\Delta))) = 2t  - \mathrm{dim}_{\mathbb{Q}_\ell} H^0_c(\nu(\mu^{-1}(\Delta)),\mathcal{T}) \le 2t \le 2n + 4.
\end{equation}
This and  (\ref{eq:orbitchi}) complete the proof of Corollary \ref{cor:Euler}.
(With a little more work, one can in fact show $\chi(\nu(\mu^{-1}(\Delta))) \le t+1  \le n + 3$.)

\section{Examples}
\label{sec:examples}
\setcounter{equation}{0}

In this section, we provide some examples of closures of orbits of dimension 2. 

\begin{Example}
\label{ex:P11andP2}
In our first two examples, the orbit closures are isomorphic to $\mathbb{P}^1_k\times \mathbb{P}^1_k$
and $\mathbb{P}^2_k$, respectively.
\begin{enumerate}
\item[(a)]
Let $\Lambda=kQ/I$, where
\begin{eqnarray*}
Q &=& \quad \vcenter{\xymatrix{
1 \ar@(,ul)_{\omega_1}  \ar@(dl,d)_{\omega_2} \ar@<0.8ex>[rr]^{\alpha_1} \ar@<-0.8ex>[rr]_{\alpha_2}&&
 2}}\qquad\mbox{ and}\\
I &=& \quad\langle\omega_1^2,\omega_1\omega_2,\omega_2\omega_1,\omega_2^2,\alpha_1\omega_2,
\alpha_2\omega_1\rangle\;.
\end{eqnarray*}
Then $P=\Lambda e_1$ may be visualized by the diagram
$$P=\qquad\vcenter{\xymatrix{
&&1\ar@{-}[lld]_{\omega_1}\ar@{-}[ld]^{\omega_2}\ar@{-}[rd]_{\alpha_1}
\ar@{-}[rrd]^{\alpha_2}\\
1\ar@{-}[d]_{\alpha_1}&1\ar@{-}[d]^{\alpha_2}
&&2&2\\2&2}}$$
The action of $\mathrm{Aut}_\Lambda(P)$ on any submodule $C$ of $P$
is given by right multiplication by 
$e_1+t_1\omega_1+t_2\omega_2$
for $t_1,t_2\in k$.
For $C=\Lambda\alpha_1 + \Lambda\alpha_2 = k\alpha_1 + k \alpha_2$, 
the orbit $\mathrm{Aut}_\Lambda(P).C$ therefore consists of the points
\begin{equation}
\label{eq:fam1}
C(e_1+t_1\omega_1+t_2\omega_2) = 
k\left(\alpha_1+t_1\alpha_1\omega_1\right) + k\left(\alpha_2+t_2\alpha_2\omega_2\right) .
\end{equation}
Introducing projective coordinates $(u_0:u_1)\times (v_0:v_1)$ for $\mathbb{P}^1_k\times
\mathbb{P}^1_k$ and letting
$t_1=\frac{u_1}{u_0}$ and $t_2=\frac{v_1}{v_0}$, we find the family (\ref{eq:fam1}) of modules over 
$\mathbb{A}^2_k=\mathbb{A}^1_k\times\mathbb{A}^1_k$ 
to correspond to the affine patch, where $u_0\neq 0$ and $v_0\neq 0$, of the family 
of modules
\begin{eqnarray*}
\label{eq:fam11}
\tilde{C} &=& 
k[u_0,u_1,v_0,v_1]\left(u_0\alpha_1+u_1\alpha_1\omega_1\right) + \\
&&k[u_0,u_1,v_0,v_1]\left(v_0\alpha_2+v_1\alpha_2\omega_2\right) 
\end{eqnarray*}
over $\mathbb{P}^1_k\times\mathbb{P}^1_k$.
Sending $(u_0:u_1)\times (v_0:v_1)\in \mathbb{P}^1_k\times\mathbb{P}^1_k$ to 
$$ \Lambda\left(u_0\alpha_1+u_1\alpha_1\omega_1\right) + 
\Lambda\left(v_0\alpha_2+v_1\alpha_2\omega_2\right)$$
in $\mathfrak{Grass}^{S_1}_5$, we obtain a well-defined morphism of schemes 
$$\mathbb{P}^1_k\times\mathbb{P}^1_k \to \mathfrak{Grass}^{S_1}_5\;.$$
It follows that $\overline{\mathrm{Aut}_\Lambda(P).C}\cong \mathbb{P}^1_k\times\mathbb{P}^1_k$.
The boundary $\overline{\mathrm{Aut}_\Lambda(P).C} \, -\, \mathrm{Aut}_\Lambda(P).C$ consists of precisely three $\mathrm{Aut}_\Lambda(P)$-orbits (two of them $1$-dimensional), meaning that $M = P/C$ has precisely three top-stable degenerations, up to isomorphism.

\item[(b)]
Let $\Lambda=kQ/I$, where
$$Q = \quad \vcenter{\xymatrix{
1 \ar@(u,ul)_{\omega_1}  \ar@(dl,d)_{\omega_2} \ar[r]^{\alpha}&
 2}}\qquad\mbox{and}\qquad I=\quad\langle \omega_i\omega_j\;|\; i,j\in\{1,2\}\rangle\;.$$
 Let $P=\Lambda e_1$ and $C=\Lambda\alpha = k\alpha$. Sending $(z_0:z_1:z_2)\in\mathbb{P}^2_k$ to 
$ \Lambda\left(z_0\alpha+z_1\alpha\omega_1+z_2\alpha\omega_2\right)$
in $\mathrm{Grass}^{S_1}_5$, we obtain a well-defined morphism of schemes 
$$\mathbb{P}^2_k \to \mathfrak{Grass}^{S_1}_5\;$$
with the affine patch $z_0 \ne 0$ corresponding to $\mathrm{Aut}_\Lambda(P).C$. 
We conclude that $\overline{\mathrm{Aut}_\Lambda(P).C}\cong \mathbb{P}^2_k$.
The boundary $\overline{\mathrm{Aut}_\Lambda(P).C} \, -\, \mathrm{Aut}_\Lambda(P).C$
consists of infinitely many  $0$-dimensional $\mathrm{Aut}_\Lambda(P).C$-orbits corresponding to a 
$\mathbb{P}^1_k$-family of isomorphism classes of top-stable degenerations of $M = P/C$.  
Namely, this family is given by $\left(P / C_{(a:b)}\right)$, where 
$C_{(a:b)} = \Lambda(a \alpha \omega_1 + b \alpha \omega_2)$.
\end{enumerate}
\end{Example}

\begin{Example}
\label{ex:X2}
In our next example, the orbit closure $\overline{\mathrm{Aut}_\Lambda(P) . C}$
is isomorphic to the Hirzebruch surface $X_2$.
Let $\Lambda=kQ/I$, where
$$Q = \quad {\xymatrix{
3&&\ar[ll]_{\gamma} 1 
\ar@'{@+{[0,0]+(4,4)} @+{[0,0]+(0,15)}
@+{[0,0]+(-4,4)}}_(.4){\omega}
\ar@<0.8ex>[rr]^{\alpha} \ar@<-0.8ex>[rr]_{\beta} &&2}}
\qquad\mbox{and}\qquad I=
\quad\langle \omega^3,\beta\omega^2\rangle\;.$$
Moreover, we let $P=\Lambda e_1$ and 
$$C=\Lambda(\alpha+\beta) +\Lambda\alpha\omega +\Lambda\gamma\omega= 
k(\alpha+\beta)+k\alpha\omega +k\gamma\omega \in \mathfrak{Grass}^{S_1}_8 .$$ 
The module $M = P/C$ is displayed at the top of Figure \ref{fig:X2}.
Again, $\mathrm{Aut}_\Lambda(P) .C$ consists of the points 
\begin{eqnarray*}
C(e_1+t_1\omega+t_2\omega^2) &=& 
k\left(\alpha+\beta+t_1\alpha\omega+t_1\beta\omega+t_2\alpha\omega^2\right) +\nonumber\\
&& k\left(\alpha\omega+t_1\alpha\omega^2\right) + k\left(\gamma\omega + t_1\gamma\omega^2\right) ,
\end{eqnarray*}
for $(t_1,t_2)\in\mathbb{A}^2_k$.
\begin{figure}[ht] \caption{\label{fig:X2} The hierarchy of top-stable degenerations  in 
Example \ref{ex:X2}.}
 $$ \xymatrixrowsep{2.0pc}\xymatrixcolsep{1.2pc}
\xymatrix{
 &&&&&&&1 \edge[dl]_{\gamma} \edge[d]_(0.7){\omega} \edge[dr]_(0.7){\alpha} \edge@/^/[dr]^{\beta}  \\
 &&&&&&3 &1 \edge[d]_{\omega} \edge[dr]^(0.6){\beta} &2 & 
 {\save+<1.5ex,3ex> \drop{\mbox{\small $(1:1) \in \mathbb{P}^1_k$}}\restore}  
\\
 &&&&&&&1 \edge[dl]_{\gamma} \edge[dr]^{\alpha} &2  \\
 &&&&& \dttdar[ddlll] &3 &   {\save[0,0]+<0ex,-4ex>  \ar@{}^{}="YY" \restore} {\save[3,0]+<0ex,7ex>  \ar@{}^{}="AA" \restore} \dttdar"YY";"AA" &2 & \dttdar[ddrrr]  \\
 &&&&&&& &&&&&&&&  \\  &&&&&&& &&&&&&&&  \\
 &1 \edge[dl]_{\gamma} \edge[d]_(0.7){\omega} \edge[dr]^(0.7){\alpha} \edge@/^/[drr]^{\beta} &&&&&&1 \edge[dl]_{\gamma} \edge[d]_(0.7){\omega} \edge[dr]^(0.7){\alpha} \edge@/^/[drr]^{\beta} &&&&&&1 \edge[dl]_{\gamma} \edge[d]_(0.7){\omega} \edge[dr]^(0.7){\alpha} \edge@/^/[drr]^{\beta}   &&  \\
3 &1 \edge[d]_(0.7){\omega} \edge[dr]^{\beta} &2 &2 &&&3 &1 \edge[dl]_{\gamma} \edge[d]_(0.7){\omega} \edge[dr]_(0.7){\alpha} \edge@/^/[dr]^(0.6){\beta}  &2 &2 &&&3 &1 \edge[dl]_{\gamma} \edge[d]_(0.7){\omega} \edge[dr]^(0.6){\alpha} &2 &2  \\
 &1 \edge[d]^{\gamma} &2 &&&&3 &1 &2 &&&&3 &1 &2  &  \\
 &3  {\save[0,0]+<0ex,-7ex> \drop{} \ar@{}^{}="BB" \restore}  {\save[3,0]+<0ex,7ex> \drop{} \ar@{}^{}="CC" \restore} \dttdar"BB";"CC" &&&&&&&& {\save+<-2ex,5ex> \drop{\mbox{\small $(a:b) \in \mathbb{P}_k^1$}} \restore} {\save+<-2ex,2ex> \drop{\mbox{\small $a,b \in k^*$}} \restore}  \\  \\  \\
 &1 \edge[dl]_{\gamma} \edge[d]_(0.7){\omega} \edge[dr]^(0.7){\alpha} \edge@/^/[drr]^{\beta}  \\
3 &1 \edge[dl]_{\gamma} \edge[d]_(0.7){\omega} \edge[dr]^{\beta} &2 &2  \\
3 &1 &2 &&&&&
}$$
\end{figure}

We will exhibit an isomorphism from 
$$X_2 = \{ (z_0: z_1) \times (y_0: y_1: y_2: y_3)  \in \mathbb{P}^1_k \times \mathbb{P}^3_k \mid z_0 y_1 = z_1 y_0 \text{\ and \ } z_0 y_2 = z_1 y_1\}$$
to $\overline{\mathrm{Aut}_\Lambda(P).C}$, which sends a point
$(z_0: z_1) \times (y_0: y_1: y_2: y_3)$ of 
$X_2$ with $z_0 \ne 0$ and $y_0 \ne 0$ to the point
$$C \left(e_1 + \frac{z_1}{z_0} \omega \ + \ \frac{y_3}{y_0} \omega^2 \right) \  \in \ \mathrm{Aut}_\Lambda(P).C.$$
In other words, we will check that the Zariski-continuous extension $\varphi: X_2 \rightarrow \overline{\mathrm{Aut}_\Lambda(P).C}$ of this assignment is indeed an isomorphism onto $\overline{\mathrm{Aut}_\Lambda(P).C}$.  For this purpose, we consider  the following affine open cover $(U_i)_{1 \le i \le 4}$ of $X_2$:  Namely, $U_1$ consists of the points with $z_0 = y_0 = 1$, that is, the following copy 
$$U_1 = \{ \left(1: z_1\right) \times \left(1: z_1: z_1^2: y_3\right)  \mid z_1, y_3 \in k\};$$
of affine $2$-space; $U_2$ consists of the points with $z_0 = y_3 = 1$, that is,
$$U_2 = \{ \left(1: z_1\right) \times \left(y_0: z_1y_0 : z_1^2 y_0: 1\right)\mid z_1, y_0 \in k \};$$
$U_3$ consists of the points with $z_1 = y_2 = 1$, that is,
$$U_3 = \{ \left(z_0:  1 \right) \times \left(z_0^2: z_0 :1 : y_3\right)\mid z_0, y_3 \in k \};$$
and $U_4$ consists of the points with $z_1 = y_3 = 1$, that is,
$$U_4 = \{ \left(z_0: 1 \right) \times \left(z_0^2 y_2: z_0 y_2 :y_2: 1\right)\mid z_0, y_2 \in k \}.$$

In describing the restrictions $\varphi_i$ of $\varphi$ to the affine charts $U_i$, we use the following convention:  If $\tau: \mathbb{A}_k^1 \,-\, \{a_1, \dots, a_r\}\,  \longrightarrow \,\overline{\mathrm{Aut}_\Lambda(P).C}$ is a smooth curve and $\overline{\tau}$ its unique extension to a curve defined on $\mathbb{P}_k^1$, we denote $\overline{\tau}(a_j)$ by $\lim_{t \rightarrow a_j} \tau(t)$.

The morphism $\varphi_1$ sends a point $(1: z_1) \times (1: z_1: z_1^2: y_3)$ of $U_1$ to $C(e_1 + z_1 \omega + y_3 \omega^2)$, and thus induces an isomorphism from $U_1$ onto $\mathrm{Aut}_\Lambda(P).C$.

The restriction $\varphi_2$ sends a point $(1:z_1) \times (y_0: z_1y_0 :z_1^2 y_0: 1) \in U_2$ to $\, C(e_1 +  z_1 \omega + \frac{1}{y_0} \omega^2)$ if $y_0 \ne 0$, and to 
$$\lim_{t \rightarrow 0} \;C\left(e_1 +  z_1 \omega + \frac{1}{t} \omega^2\right)$$
if $y_0 = 0$.  For $t \ne 0$ a short calculation shows
$$C\left(e_1 +  z_1 \omega + \frac{1}{t} \omega^2\right) = 
\Lambda \left(t\alpha + t\beta + t z_1\alpha\omega+t z_1\beta\omega +\alpha \omega^2\right)
 + \Lambda \left(\alpha \omega + z_1 \alpha \omega^2\right) +
\Lambda \left(\gamma \omega + z_1 \gamma \omega^2\right).$$
Thus
$$\lim_{t \rightarrow 0} \;C\left(e_1 +  z_1 \omega + \frac{1}{t} \omega^2\right) \ =  \ \Lambda \alpha \omega^2 + \Lambda \alpha \omega  + \Lambda\left(\gamma \omega + z_1 \gamma \omega^2\right).$$
As $z_1$ traces $k$, the latter points trace the orbit $\mathrm{Aut}_\Lambda(P).E$, where $E =  \Lambda \alpha \omega^2 + \Lambda \alpha \omega + \Lambda \gamma \omega$.  This is the only $1$-dimensional orbit in $\overline{\mathrm{Aut}_\Lambda(P).C}$; the corresponding degeneration $P/E$ of $M = P/C$ is depicted in the left-hand position of the second row of Figure \ref{fig:X2}.  

The restriction $\varphi_3$ sends any point $(z_0: 1 ) \times (z_0^2 : z_0: 1 :y_3) \in U_3$ to $C(e_1 +  \frac{1}{z_0} \omega  + \frac{y_3}{z_0^2} \omega^2)$ if $z_0 \ne 0$ and to
$$\lim_{t \rightarrow 0}\; C\left(e_1 +  \frac{1}{t} \omega  + \frac{y_3}{t^2} \omega^2\right)$$
if $z_0 = 0$.  For $t \ne 0$ a calculation similar to the one above shows 
$$\lim_{t \rightarrow 0} \;C\left(e_1 +  \frac{1}{t} \omega  + \frac{y_3}{t^2} \omega^2\right) \ =  \ \Lambda \left( (1 - y_3) \alpha \omega + \beta \omega\right) + \Lambda \alpha \omega^2 + \Lambda \gamma \omega^2.$$
When $y_3 = 1$ this gives the point 
$\Lambda \beta \omega + \Lambda \alpha \omega^2 + \Lambda \gamma \omega^2 $ 
which constitutes  the $0$-dimensional orbit corresponding to the degeneration that appears on the 
far right in the second row of Figure \ref{fig:X2}.  As $1- y_3 $ varies over $ k^*$ (i.e. $y_3$ varies over 
$k-\{1\}$) we have a $k^*$-family of degeneration of $M=P/C$ as shown in the central position of 
Figure \ref{fig:X2}.

Finally, we consider the restriction $\varphi_4$ of $\varphi$ to points
$Q= \left(z_0: 1 \right) \times \left(z_0^2 y_2: z_0 y_2 :y_2: 1\right) \in U_4$. 
If $z_0 \ne 0$ and $y_2 \ne 0$, 
then $\varphi_4(Q)=C\left(e_1 + \frac{1}{z_0} \omega + \frac{1}{z_0^2 y_2} \omega^2\right)$. For $z_0 = 0$ and $y_2 \ne 0$,
we find as above that 
$$\varphi_4(Q)=\lim_{t \rightarrow 0} \; C\left(e_1 +  \frac{1}{t} \omega  + \frac{1}{t^2y_2} \omega^2\right) \ =  \ \Lambda \left( \left(1 - \frac{1}{y_2}\right) \alpha \omega + \beta \omega\right) + \Lambda \alpha \omega^2 + \Lambda \gamma \omega^2,$$
yielding points already encountered in $\mathrm{Im}(\varphi_3)$. 

If $z_0 \ne 0$ and $y_2= 0$, one finds in a similar way that
$$\varphi_4(Q) = \lim_{t \rightarrow 0} \; C\left(e_1 + \frac{1}{z_0} \omega + \frac{1}{z_0^2 t} \omega^2\right)  = \Lambda \alpha\omega \ + \Lambda \left(\alpha \omega + \frac{1}{z_0} \alpha \omega^2\right) \ + \ \Lambda\left( \gamma \omega + \frac{1}{z_0} \gamma \omega^2\right).$$
The resulting points in the boundary of $\mathrm{Aut}_\Lambda(P).C$, as $z_0$ ranges over $k^*$, retrace the points in the orbit $\mathrm{Aut}_\Lambda(P).E \,-\, \{E\}$,  where $E \in \mathrm{Im}(\varphi_2)$ is as specified above.

If $z_0 = y_2 = 0$, on the other hand, then
$$\varphi_4(Q) = \lim_{t \rightarrow 0} \left( \lim_{s \rightarrow 0} \; C\left(e_1 + \frac{1}{t} \omega + \frac{1}{t^2 s} \omega^2\right)  \right) = \Lambda \alpha \omega + \Lambda \alpha \omega^2 + \Lambda \gamma \omega^2.$$
This value of $\varphi_4$ gives rise to the maximal top-stable degeneration that appears in the third row of Figure \ref{fig:X2}.

One checks that the maps $\varphi_i$ coincide on the overlaps of their domains, that each yields an isomorphism from $U_i$ to an open subvariety of $\overline{\mathrm{Aut}_\Lambda(P).C}$, and that their images cover $\overline{\mathrm{Aut}_\Lambda(P).C}$.
\end{Example}

\begin{Example}
\label{ex:blowup}
In this example, we retain the same algebra $\Lambda=kQ/I$ introduced in Example \ref{ex:X2}, as
well as the projective module $P$. But we use a different point $C$ in $\mathfrak{Grass}^{S_1}_8$.
We will show that in this case the minimal desingularization 
$\overline{\mathrm{Aut}_\Lambda(P).C}^\dagger$ 
is isomorphic to a blow-up $B$ of $X_2$ at a closed point such that both $\mathbb{P}^2_k$ 
and $X_2$ are relatively minimal models. Moreover, we will show that the orbit closure 
$\overline{\mathrm{Aut}_\Lambda(P).C}$ is obtained from 
$B$ by blowing down a curve of self-intersection $-2$, resulting in a singular projective surface.
In other words, we have the following diagram of birational morphisms
\begin{equation}
\label{eq:nicepic}
\xymatrix{
&&&&&\overline{\mathrm{Aut}_\Lambda(P) . C}^\dagger\ar[d]^f\ar[dll]_{\rho_1}\ar[rrd]^{\rho_3}\\
&&&X_1\ar[llld]_{\rho_2}&&\overline{\mathrm{Aut}_\Lambda(P) . C}&&X_2\\
\mathbb{P}^2_k}
\end{equation}
where each of $\rho_1$, $\rho_2$ and $\rho_3$ results from blowing down a curve of self-intersection $-1$ and the morphism
$\overline{\mathrm{Aut}_\Lambda(P) . C}^\dagger \xrightarrow{\;f} \overline{\mathrm{Aut}_\Lambda(P) . C}$
results from blowing down a curve of self-intersection $-2$.

Let $\Lambda=kQ/I$ be as in Example \ref{ex:X2}, let $P=\Lambda e_1$, and define
$$C=\Lambda(\alpha+\beta) +\Lambda\alpha\omega +\Lambda\gamma= 
k(\alpha+\beta)+k\alpha\omega +k\gamma \in\mathfrak{Grass}^{S_1}_8.$$ 
The module $M=P/C$ is displayed at the top of the Figure \ref{fig:blowup}.
Again, $\mathrm{Aut}_\Lambda(P) .C$ consists of the points 
\begin{eqnarray*}
C(e_1+t_1\omega+t_2\omega^2) &=& 
k\left(\alpha+\beta+t_1\alpha\omega+t_1\beta\omega+t_2\alpha\omega^2\right) +\nonumber\\
&& k\left(\alpha\omega+t_1\alpha\omega^2\right) + k\left(\gamma+t_1\gamma\omega + t_2\gamma\omega^2\right) ,
\end{eqnarray*}
for $(t_1,t_2)\in\mathbb{A}^2_k$.
\begin{figure}[ht] \caption{\label{fig:blowup} The hierarchy of top-stable degenerations  in 
Example \ref{ex:blowup}.}
$$ \xymatrixrowsep{2.0pc}\xymatrixcolsep{.9pc}
\xymatrix{
 &&&&&&&&&&1 \edge[d]_(0.7){\omega} \edge[dr]_(0.7){\alpha} \edge@/^/[dr]^{\beta}  \\
 &&&&&&&&& &1 \edge[dl]_{\gamma} \edge[d]_{\omega} \edge[dr]^(0.6){\beta} &2 & 
 {\save+<1.5ex,3ex> \drop{\mbox{\small $(1:1) \in \mathbb{P}^1_k$}}\restore}  \\
&&& &&&&&&3&1 \edge[dl]_{\gamma} \edge[dr]^{\alpha} &2  \\
&&& &&&&&\dttdar[ddllllll] &3&\dttdar[ddll]\dttdar[ddrr]& 2&\dttdar[ddrrrrrr]\\
 &&& &&&&&&& &&&&&&&& &&&&&  \\  &&& &&&&&&& &&&&&&&& &&&&& \\
 &1 \edge[dl]_{\gamma} \edge[d]_(0.7){\omega} \edge[dr]^(0.7){\alpha} \edge@/^/[drr]^{\beta} &&&&&&1 \edge[dl]_{\gamma} \edge[d]_(0.7){\omega} \edge[dr]^(0.7){\alpha} \edge@/^/[drr]^{\beta} &&&&&&1 \edge[dl]_{\gamma} \edge[d]_(0.7){\omega} \edge[dr]^(0.7){\alpha} \edge@/^/[drr]^{\beta}   &&&&&&1 \edge[dl]_{\gamma} \edge[d]_(0.7){\omega} \edge[dr]^( 0.7){\alpha} \edge@/^/[drr]^{\beta} && \\
3 &1 \edge[dl]_{\gamma} \edge[d]_(0.7){\omega} \edge[dr]^{\beta} &2 &2 &&&3 &1  \edge[d]_(0.7){\omega} \edge[dr]_(0.7){\alpha} \edge@/^/[dr]^(0.6){\beta}  &2 &2 &&&3 &1 \edge[dl]_{\gamma} \edge[d]_(0.7){\omega} \edge[dr]_(0.7){\alpha} \edge@/^/[dr]^(0.6){\beta} &2 &2&&&3&1 \edge[dl]_{\gamma} \edge[d]_(0.7){\omega} \edge[dr]^{\alpha} &2&2  \\
3 &1 &2 &&&& &1\edge[d]^{\gamma} &2 &&&&3 &1 &2  & &&&3&1&2 \\
 &&&&&&&  {\save[0,0]+<0ex,-7ex> \drop{} \ar@{}^{}="BB" \restore}  {\save[3,0]+<0ex,7ex> \drop{} \ar@{}^{}="CC" \restore} \dttdar"BB";"CC" 3&&{\save+<-1ex,5ex> \drop{\mbox{\small $(1:1) \in \mathbb{P}_k^1$}} \restore} &&&&&&{\save+<-2ex,5ex> \drop{\mbox{\small $(a:b) \in \mathbb{P}_k^1$}} \restore} 
 {\save+<-2ex,2ex> \drop{\mbox{\small $0 \neq a\neq b \neq 0$}} \restore}  \\ \\  \\
 &&&&&&&1 \edge[dl]_{\gamma} \edge[d] \edge[dr]^(0.7){\alpha} \edge@/^/[drr]^{\beta}  \\
&&&&&&3 &1 \edge[dl]_{\gamma} \edge[d] \edge[dr]_(0.7){\alpha} \edge@/^/[dr]^(0.6){\beta} &2 &2  \\
&&&&&&3 &1 &2 &{\save+<1.5ex,3ex> \drop{\mbox{\small $(1:1) \in \mathbb{P}_k^1$}} \restore}
}$$
\end{figure}

We consider the blow-up $B$ of 
$$X_2 = \{ (z_0: z_1) \times (y_0: y_1: y_2: y_3)  \in \mathbb{P}^1_k \times \mathbb{P}^3_k \mid z_0 y_1 = z_1 y_0 \text{\ and \ } z_0 y_2 = z_1 y_1\}$$
at the closed point $(z_0:z_1) \times (y_0:y_1:y_2:y_3) = (0:1) \times (0:0:1:0)$. In other words, 
\begin{eqnarray*}
B&=&\{ (z_0: z_1) \times (y_0: y_1: y_2: y_3)\times (x_0:x_1)  \in \mathbb{P}^1_k \times \mathbb{P}^3_k 
\times \mathbb{P}^1_k \mid \\
&& \qquad\qquad\qquad\qquad\qquad z_0 y_1 = z_1 y_0, z_0 y_2 = z_1 y_1 
\mbox{ and } z_0x_1=y_3x_0\}.
\end{eqnarray*}
We consider the following affine cover of $B$, consisting of 5 affine 2-spaces:
\begin{eqnarray*}
\tilde{U}_1 &=& \{ \left(1: z_1\right) \times \left(1: z_1: z_1^2: y_3\right)\times (1:y_3)  \mid z_1, y_3 \in k\};\\
\tilde{U}_2 &=& \{ \left(1: z_1\right) \times \left(y_0: z_1y_0 : z_1^2 y_0: 1\right)\times (1:1)\mid z_1, y_0 \in k \};\\
\tilde{U}_{31} &=& \{ \left(z_0:  1 \right) \times \left(z_0^2: z_0 :1 : \xi z_0\right)\times (1:\xi)\mid z_0, \xi \in k \};\\
\tilde{U}_{32} &=& \{ \left(\zeta y_3:  1 \right) \times \left(\zeta^2y_3^2: \zeta y_3:1 : y_3\right)\times (\zeta:1)\mid \zeta, y_3 \in k \};\\
\tilde{U}_4 &=& \{ \left(z_0: 1 \right) \times \left(z_0^2 y_2: z_0 y_2 :y_2: 1\right)\times (z_0:1)\mid z_0, y_2 \in k \}.
\end{eqnarray*}
We now define a morphism $\varphi: B \to \overline{\mathrm{Aut}_\Lambda(P).C}$ by defining
the restrictions of $\varphi$ to these 5 affine patches as follows.

The restriction $\varphi_1$ of $\varphi$ to $\tilde{U}_1$
sends a point $(1: z_1) \times (1: z_1: z_1^2: y_3)\times (1:y_3)$ of $\tilde{U}_1$ to $C(e_1 + z_1 \omega + y_3 \omega^2)$, and thus induces an isomorphism from $\tilde{U}_1$ onto $\mathrm{Aut}_\Lambda(P).C$.

The restriction $\varphi_2$ sends a point $(1:z_1) \times (y_0: z_1y_0 :z_1^2 y_0: 1)\times (1:1)$
of $\tilde{U}_2$ to $\, C\left(e_1 +  z_1 \omega + \frac{1}{y_0} \omega^2\right)$ if $y_0 \ne 0$, and to 
\begin{eqnarray*}
\lim_{t \rightarrow 0} \;C\left(e_1 +  z_1 \omega + \frac{1}{t} \omega^2\right) &=&
\Lambda \alpha \omega^2 + \Lambda \alpha \omega  + \Lambda \gamma\omega^2
\end{eqnarray*}
if $y_0=0$.
The corresponding degeneration of $M = P/C$ is depicted in the left-hand position of the second row of Figure \ref{fig:blowup}.  

The restriction $\varphi_{31}$ sends any point $(z_0: 1 ) \times (z_0^2 : z_0: 1 :\xi z_0)\times (1:\xi) \in \tilde{U}_{31}$ to $\, C\left(e_1 +  \frac{1}{z_0} \omega  + \frac{\xi}{z_0} \omega^2\right)$ if $z_0 \ne 0$ and to
\begin{eqnarray*}
\lim_{t \rightarrow 0}\; C\left(e_1 +  \frac{1}{t} \omega  + \frac{\xi}{t} \omega^2\right)&=&
\Lambda (\alpha\omega+\beta\omega) + \Lambda \alpha\omega^2 + \Lambda (\gamma\omega
+ \xi \gamma\omega^2)
\end{eqnarray*}
if $z_0 = 0$.  
As $\xi$ traces $k$, the latter points trace the orbit $\mathrm{Aut}_\Lambda(P).D$, where 
$D =  \Lambda (\alpha\omega+\beta\omega) + \Lambda \alpha\omega^2 + \Lambda \gamma\omega$. 
 This is the only $1$-dimensional orbit in $\overline{\mathrm{Aut}_\Lambda(P).C}$; the corresponding degeneration $P/D$ of $M = P/C$ is depicted in the second-to-left position of the second row of Figure \ref{fig:blowup}.  
 
Next, we consider the restriction $\varphi_{32}$ of $\varphi$ to points
$R= (\zeta y_3: 1 ) \times (\zeta^2 y_3^2 : \zeta y_3: 1 :y_3)\times (\zeta:1) \in \tilde{U}_{32}$. If $\zeta\neq 0$ and $y_3\neq 0$,
then $\varphi_{32}(R)=C(e_1 +  \frac{1}{\zeta y_3} \omega  + \frac{1}{\zeta^2 y_3} \omega^2)$.
For $\zeta=0$ and $y_3\neq 0$, we get that $\varphi_{32}(R)$ is equal to the limit
\begin{eqnarray*}
\lim_{t \rightarrow 0}\;  C\left(e_1 +  \frac{1}{t y_3} \omega  + \frac{1}{t^2 y_3} \omega^2\right)
&=& \Lambda \left(\beta\omega + (1-y_3) \alpha\omega\right) + 
\Lambda \alpha\omega^2 + \Lambda \gamma\omega^2.
\end{eqnarray*}
When $y_3=1$ this gives the point 
$\Lambda \beta\omega+\Lambda \alpha\omega^2 + \Lambda \gamma\omega^2$ which constitutes
the 0-dimensional orbit corresponding to the degeneration that appears on the far right in the second
row of Figure \ref{fig:blowup}.  
As $1-y_3$ varies over $k^*$ (i.e. $y_3$ varies over $k-\{1\}$) we obtain a $k^*$-family of 
degenerations of $M=P/C$ as shown in the second-to-right position of the second row of Figure \ref{fig:blowup}.  

If $\zeta\neq 0$ and $y_3=0$, one finds in a similar way that
$$\varphi_{32}(R)\ = \
\lim_{t \rightarrow 0}\;  C\left(e_1 +  \frac{1}{\zeta t} \omega  + \frac{1}{\zeta^2 t} \omega^2\right)
= \Lambda (\alpha\omega+\beta\omega) + \Lambda \alpha\omega^2 + \Lambda \left(\gamma\omega
+\frac{1}{\zeta} \gamma\omega^2\right).$$
The resulting points in the boundary of $\mathrm{Aut}_\Lambda(P).C$, as $\zeta$ ranges over
$k^*$, retrace the points in the orbit $\mathrm{Aut}_\Lambda(P).D$, where $D\in\mathrm{Im}(\varphi_{31})$ is specified as above.

If $\zeta=y_3=0$, on the other hand, then
$$\varphi_{32}(R) \ = \ 
\lim_{t \rightarrow 0} \left( \lim_{s \rightarrow 0} \; C\left(e_1 + \frac{1}{ts} \omega + \frac{1}{t^2 s} \omega^2\right)  \right) = \Lambda (\alpha \omega + \beta\omega)
+ \Lambda \alpha \omega^2 + \Lambda \gamma \omega^2.$$
This value of $\varphi_{32}$ gives rise to the maximal top-stable degeneration that appears in the third row of Figure \ref{fig:blowup}.

Finally, we consider the restriction $\varphi_4$ of  $\varphi$ to points
$Q= \left(z_0: 1 \right) \times \left(z_0^2 y_2: z_0 y_2 :y_2: 1\right)\times (z_0:1) \in \tilde{U}_4$. 
If $z_0 \ne 0$ and $y_2 \ne 0$, then $\varphi_4(Q)=C\left(e_1 + \frac{1}{z_0} \omega + \frac{1}{z_0^2 y_2} \omega^2\right)$. For $z_0 = 0$ and $y_2 \ne 0$, we get that
$\varphi_4(Q)$ is equal to the limit
\begin{eqnarray*}
\lim_{t \rightarrow 0} \; C\left(e_1 +  \frac{1}{t} \omega  + \frac{1}{t^2 y_2} \omega^2\right) &=&
\Lambda\left(\beta \omega + \left(1 - \frac{1}{y_2}\right) \alpha \omega\right) + \Lambda\alpha\omega^2 + \Lambda \gamma\omega^2.
\end{eqnarray*}
Thus, we obtain points already encountered in $\mathrm{Im}(\varphi_{32})$. 

If $z_0 \ne 0$ and $y_2= 0$, one finds in a similar way that
$$\varphi_4(Q) = \lim_{t \rightarrow 0} \; C\left(e_1 + \frac{1}{z_0} \omega + \frac{1}{z_0^2 t} \omega^2\right)  = \Lambda \alpha\omega \ + \Lambda \left(\alpha \omega + \frac{1}{z_0} \alpha \omega^2\right) \ + \ \Lambda \gamma \omega^2,$$
yielding the degeneration already encountered in $\mathrm{Im}(\varphi_2)$. 
If $z_0 = y_2 = 0$, then we again obtain that
$$\varphi_4(Q) = \lim_{t \rightarrow 0} \left( \lim_{s \rightarrow 0} \; C\left(e_1 + \frac{1}{t} \omega + \frac{1}{t^2 s} \omega^2\right)  \right) = \Lambda \alpha \omega + \Lambda \alpha \omega^2 + \Lambda \gamma \omega^2.$$

One checks that the maps $\varphi_i$ for $i\in\{1,2,31,32,4\}$ coincide on the overlaps of their domains,
and that their images cover $\overline{\mathrm{Aut}_\Lambda(P).C}$. 
Moreover, for $i\in\{1,31,32\}$, $\varphi_i$ yields an 
isomorphism from $U_i$ to an open subvariety of $\overline{\mathrm{Aut}_\Lambda(P).C}$.
On $U_2$, however, $\varphi_2$ sends the curve $y_0=0$, corresponding to the points
$(1:z_1)\times (0:0:0:1)$ for $z_1\in k$, to the single point $\Lambda\alpha\omega^2+
\Lambda \alpha\omega + \Lambda\gamma\omega^2$.
Similarly, on $U_4$, $\varphi_4$ sends the curve $y_2=0$, corresponding to the points
$(z_0:1)\times (0:0:0:1)$ for $z_0\in k$, again to the same point $\Lambda\alpha\omega^2+
\Lambda \alpha\omega + \Lambda\gamma\omega^2$. The curves $y_0=0$ on $U_2$, respectively,
$y_2=0$ on $U_4$, define the same projective curve on the blow-up $B$. We need to blow
down this curve on $B$ to a point.
Our explicit description of the maps $\varphi_i$ shows that we do not need to blow down any
further curves on $B$, but that this blow-down is isomorphic to 
the orbit closure $\overline{\mathrm{Aut}_\Lambda(P).C}$. 
We also see that the point on $X_2$ we blew up to construct $B$ does not lie on the curve we
need to blow down.
Computing the self-intersections of all the curves on $B$ corresponding to boundary points of
$\overline{\mathrm{Aut}_\Lambda(P).C}$, it follows that this curve has
self-intersection $-2$. Therefore, $\overline{\mathrm{Aut}_\Lambda(P).C}$ is a singular projective 
surface.

The blow-up $B'$ of
$X_1=\{ (z_0: z_1) \times (u_0: u_1: u_2)  \in \mathbb{P}^1_k \times \mathbb{P}^2_k \mid z_0 u_1 = z_1 u_0\}$
at the closed point $(z_0: z_1) \times (u_0: u_1: u_2) = (0:1)\times (0:0:1)$ can be described as
$$B'=\{ (z_0: z_1) \times (u_0: u_1: u_2) \times (t_0:t_1) \in \mathbb{P}^1_k \times \mathbb{P}^2_k \times \mathbb{P}^1_k \mid z_0 u_1 = z_1 u_0 \mbox{ and } z_0t_1 = u_1t_0\}.$$
Using an appropriate affine open cover, consisting of 5 affine 2-spaces,
it is straightforward to check that $B$ and $B'$ are isomorphic as schemes, giving the
diagram $(\ref{eq:nicepic})$.
\end{Example}

\begin{Example}
\label{ex:blowupsmooth}
In our last example, $\overline{\mathrm{Aut}_\Lambda(P) . C}$ is a smooth rational projective surface that
fails to be relatively minimal. More precisely, $\overline{\mathrm{Aut}_\Lambda(P).C}$ 
is isomorphic to the blow-up $B_0$ of $\mathbb{P}^1_k\times\mathbb{P}^1_k$ at $\infty\times \infty$
such that both $\mathbb{P}^2_k$ and $\mathbb{P}^1_k\times\mathbb{P}^1_k$ are relatively 
minimal models. 

Let $\Lambda=kQ/I$, where
\begin{eqnarray*}
Q &=& \quad {\xymatrix{
3&&\ar@<0.8ex>[ll]^{\delta}  \ar@<-0.8ex>[ll]_{\gamma} 1 
\ar@'{@+{[0,0]+(4,4)} @+{[0,0]+(0,15)}
@+{[0,0]+(-4,4)}}_(.4){\omega_1}
\ar@'{@+{[0,0]-(4,4)} @+{[0,0]-(0,15)}
@+{[0,0]+(4,-4)}}_(.6){\omega_2}
\ar@<0.8ex>[rr]^{\alpha} \ar@<-0.8ex>[rr]_{\beta} &&2}}
\qquad\mbox{ and}\\
I &=& \quad\langle\omega_1^2,\omega_1\omega_2,\omega_2\omega_1,\omega_2^2,\alpha\omega_2,
\beta\omega_1,\gamma\omega_2,\delta\omega_1\rangle\;.
\end{eqnarray*}
As before, let $P=\Lambda e_1$ and
$$C=\Lambda \alpha + \Lambda \beta + \Lambda \gamma +  \Lambda \delta + \Lambda (\alpha \omega_1 + \beta \omega_2) = 
k \alpha + k \beta + k \gamma +  k \delta + k (\alpha \omega_1 + \beta \omega_2)
\in \mathfrak{Grass}^{S_1}_{6}.$$
The module $M=P/C$ is displayed at the top of Figure \ref{fig:blowupsmooth}.
Again, $\mathrm{Aut}_\Lambda(P) .C$ consists of the points 
\begin{eqnarray*}
C(e_1+t_1\omega_1+t_2\omega_2) &=& 
k(\alpha+\beta+t_1\alpha\omega_1)+k(\beta+t_2\beta\omega_2)
+k(\gamma+t_1\gamma\omega_1) + \nonumber\\
&& k(\delta+t_2\delta\omega_2) + k(\alpha\omega_1+\beta\omega_2)  ,
\end{eqnarray*}
for $(t_1,t_2)\in\mathbb{A}^2_k$.
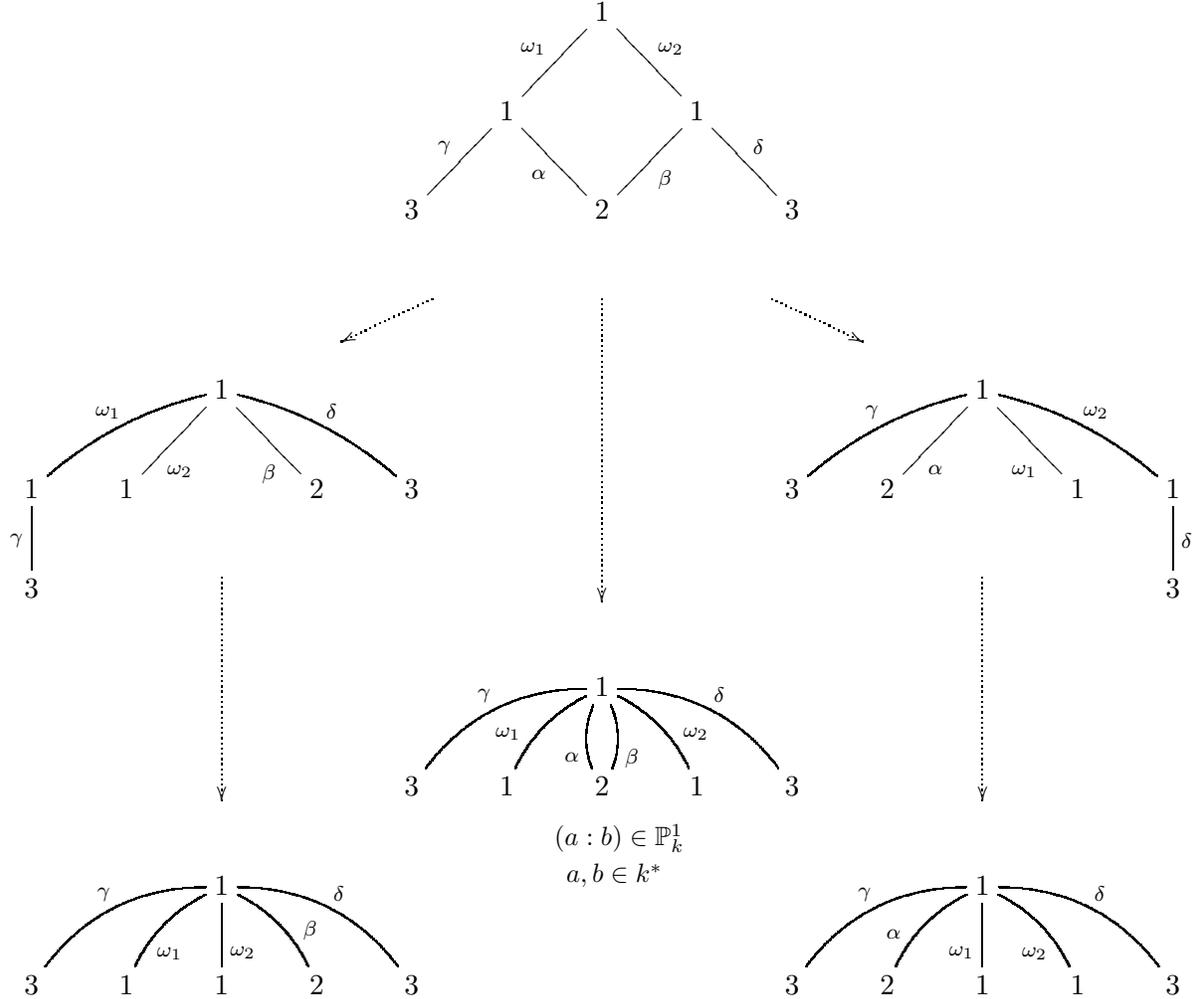
\begin{figure}[ht] \caption{\label{fig:blowupsmooth} The hierarchy of top-stable degenerations  in 
Example \ref{ex:blowupsmooth}.}
$$\xymatrix{
 &&&&&&1 \edge[dl]_{\omega_1} \edge[dr]^{\omega_2}  \\
 &&&&&1 \edge[dl]_{\gamma} \edge[dr]_{\alpha} &&1 \edge[dl]^{\beta} \edge[dr]^{\delta}  \\
 &&&&3 &{\save[0,0]+<-6ex,-7ex> \drop{} \ar@{}^{}="AA" \restore} &2 {\save[0,0]+<0ex,-7ex> \drop{} \ar@{}^{}="BB" \restore} &{\save[0,0]+<6ex,-7ex> \drop{} \ar@{}^{}="CC" \restore} &3  \\  \\  
 &&1 \edge@/_/[dll]_{\omega_1} \edge[dl]^(0.66){\omega_2} \edge[dr]_(0.66){\beta} \edge@/^/[drr]^{\delta}  &{\save[0,0]+<2ex,4ex> \drop{} \ar@{}^{}="AAA" \restore} \dttdar"AA";"AAA" &&& &&&{\save[0,0]+<-2ex,4ex> \drop{} \ar@{}^{}="CCC" \restore} \dttdar"CC";"CCC" &1 \edge@/_/[dll]_{\gamma} \edge[dl]^(0.66){\alpha} \edge[dr]_(0.66){\omega_1} \edge@/^/[drr]^{\omega_2}  \\
1 \edge[d]_{\gamma} &1 & {\save[0,0]+<0ex,-7ex> \drop{} \ar@{}^{}="DD" \restore} &2 &3 &&&&3 &2 & {\save[0,0]+<0ex,-7ex> \drop{} \ar@{}^{}="EE" \restore} &1 &1 \edge[d]^{\delta}  \\
3 &&&&&& &&&&&&3  \\  
 &&&&&&1 \edge@/_1pc/[dll]_{\gamma} \edge@/_/[dl]_(0.66){\omega_1} \edge@/_/[d]_(0.66){\alpha} \edge@/^/[d]^(0.66){\beta} \edge@/^/[dr]^(0.66){\omega_2} \edge@/^1pc/[drr]^{\delta}   {\save[0,0]+<0ex,7ex> \drop{} \ar@{}^{}="BBB" \restore} \dttdar"BB";"BBB"  \\
 &&&&3 &1 &2 {\save+<1ex,-4ex> \drop{\mbox{ \small $(a:b) \in \mathbb{P}_k^1$}} \restore} {\save+<1ex,-7ex> \drop{\mbox{\small $a,b \in k^*$}} \restore} &1 &3  \\
 &&1 \edge@/_1pc/[dll]_{\gamma} \edge@/_/[dl]^(0.66){\omega_1} \edge[d]^(0.66){\omega_2} \edge@/^/[dr]^(0.66){\beta} \edge@/^1pc/[drr]^{\delta} {\save[0,0]+<0ex,7ex> \drop{} \ar@{}^{}="DDD" \restore} \dttdar"DD";"DDD" &&&& &&&&1 \edge@/_1pc/[dll]_{\gamma} \edge@/_/[dl]_(0.66){\alpha} \edge[d]_(0.66){\omega_1} \edge@/^/[dr]_(0.66){\omega_2} \edge@/^1pc/[drr]^{\delta} {\save[0,0]+<0ex,7ex> \drop{} \ar@{}^{}="EEE" \restore} \dttdar"EE";"EEE"  \\
3 &1 &1 &2 &3 &&&&3 &2 &1 &1 &3
 }$$
 \end{figure}

Consider the blow-up $B_0$ of $\mathbb{P}^1_k\times \mathbb{P}^1_k$ at the closed point
$(z_0:z_1)\times (y_0:y_1)=(0:1)\times (0:1)$. More 
precisely,
$$B_0=\{(z_0:z_1)\times (y_0:y_1)\times (x_0:x_1)\in \mathbb{P}^1_k\times \mathbb{P}^1_k\times
\mathbb{P}^1_k \mid z_0x_1 = y_0x_0\}.$$
Let $O_1, O_2, O_3, O_4$ and $ O_5$ be the affine patches where $z_0 y_0, z_0 y_1, z_1 y_0, z_1 y_1 x_0$ and $
z_1 y_1 x_1$, respectively, are not $0$. Using these affine patches, one uses calculations similar
to those of Example \ref{ex:blowup} to produce  an isomorphism $\varphi:B_0\to \overline{\mathrm{Aut}_\Lambda(P) . C}$
with the following properties.  

A point $(z_0:z_1)\times (y_0:y_1)\times (x_0:x_1)$ of $B_0$ with $z_0\neq 0$ and $y_0\neq 0$
is sent to the point
$C\left( e_1 + \frac{z_1}{z_0} \omega_1 + \frac{y_1}{y_0}\omega_2\right) \in 
\mathrm{Aut}_\Lambda(P) . C\,.$

The restriction of $\varphi$ to $O_2$ sends a point $(1:z_1)\times (y_0:1)\times (1:y_0)$
to $\, C\left(e_1 +  z_1 \omega_1 + \frac{1}{y_0} \omega_2\right)$ if $y_0 \ne 0$, and to 
$\Lambda \alpha + \Lambda\alpha\omega_1 + \Lambda \beta\omega_2 + 
\Lambda (\gamma + z_1 \gamma\omega_1) + \Lambda\delta\omega_2$
if $y_0=0$.
As $z_1$ traces $k$, the latter points trace the orbit $\mathrm{Aut}_\Lambda(P).E_1$, where 
$E_1 = \Lambda \alpha + \Lambda \gamma + \Lambda\alpha\omega_1 + \Lambda \beta\omega_2 + 
\Lambda\delta\omega_2$. The corresponding degeneration $P/E_1$ of $M = P/C$ is depicted in the 
left position of the second row of Figure \ref{fig:blowupsmooth}.  

Analogously, the restriction of $\varphi$ to $O_3$ sends a point $(z_0:1)\times (1:y_1)\times (z_0:1)$ of $O_3$ 
to $\, C\left(e_1 +  \frac{1}{z_0} \omega_1  + y_1\omega_2\right)$ if $z_0 \ne 0$ and to
$ \Lambda \alpha\omega_1 + \Lambda\beta + \Lambda\beta\omega_2 +
\Lambda \gamma\omega_1 + \Lambda(\delta + y_1\delta\omega_2) $
if $z_0=0$.
As $y_1$ traces $k$, the latter points trace the orbit $\mathrm{Aut}_\Lambda(P).E_2$, where 
$E_2 = \Lambda \alpha\omega_1 + \Lambda\beta + \Lambda\beta\omega_2 +
\Lambda \gamma\omega_1 + \Lambda\delta$. The corresponding degeneration $P/E_2$ of $M = P/C$ is 
depicted in the right position of the second row of Figure \ref{fig:blowupsmooth}.  

The restriction of $\varphi$  to $O_4$  has the following image on $R=(z_0:1)\times (\mu z_0:1)\times (1:\mu)$. If $z_0\neq 0$ and $\mu\neq 0$, then $\varphi_4(R)=
C\left(e_1 +  \frac{1}{z_0} \omega_1  + \frac{1}{\mu z_0}\omega_2\right)$. If $z_0=0$ and $\mu\neq 0$,
$
\varphi_4(R)  = \Lambda (\alpha + \mu\beta) +\Lambda \alpha\omega_1 + \Lambda \beta\omega_2 + 
\Lambda \gamma\omega_1 + \Lambda \delta \omega_2$.
As $\mu$ ranges over $k^*$, we obtain the degenerations in the third row of Figure \ref{fig:blowupsmooth}
corrsponding to the points $(1:\mu)\in\mathbb{P}^1_k$ for $\mu\in k^*$.
If $z_0\neq 0$ and $\mu = 0$, 
$\varphi_4(R) = \Lambda \alpha + \Lambda\alpha\omega_1 + \Lambda \beta\omega_2 + 
\Lambda \left(\gamma + \frac{1}{z_0} \gamma\omega_1\right) + \Lambda\delta\omega_2$;
these are degenerations already encountered in the image of $\varphi_2$.
If $z_0=0=\mu$, then
$\varphi_4(R) = \Lambda \alpha + \Lambda\alpha\omega_1 + \Lambda \beta\omega_2 + 
\Lambda \gamma\omega_1 + \Lambda\delta\omega_2.$
This is the 0-dimensional orbit corresponding to the degeneration in the left position of the fourth row
of Figure \ref{fig:blowupsmooth}.

The restriction of $\varphi$ to points $Q=(\nu y_0:1)\times (y_0:1)\times (\nu:1)\in O_5$ yields the following additional degenerations.
For $\nu\neq 0$ and $y_0= 0$,
$\varphi_5(Q)=\Lambda (\nu\alpha + \beta) +\Lambda \alpha\omega_1 + \Lambda \beta\omega_2 + 
\Lambda \gamma\omega_1 + \Lambda \delta \omega_2.$
As $\nu$ ranges over $k^*$, we obtain the degenerations in the third row of Figure \ref{fig:blowupsmooth}
corresponding to the points $(\nu:1)\in\mathbb{P}^1_k$ for $\mu\in k^*$.
On the other hand, if $\nu=0=y_0$, then
$\varphi_5(Q) =\Lambda\alpha\omega_1 + \Lambda \beta +  \Lambda \beta\omega_2 + 
\Lambda \gamma\omega_1 + \Lambda\delta\omega_2.$
This is the 0-dimensional orbit corresponding to the degeneration in the right position of the fourth row
of Figure \ref{fig:blowupsmooth}.

The blow-up $B_0'$ of
$X_1=\{ (z_0: z_1) \times (u_0: u_1: u_2)  \in \mathbb{P}^1_k \times \mathbb{P}^2_k \mid z_0 u_1 = z_1 u_0\}$
at the closed point $(z_0,u_2)=(0,0)$ can be described as
$$B_0'=\{ (z_0: z_1) \times (u_0: u_1: u_2) \times (t_0:t_1) \in \mathbb{P}^1_k \times \mathbb{P}^2_k \times \mathbb{P}^1_k \mid z_0 u_1 = z_1 u_0 \mbox{ and } z_0t_1 = u_2t_0\}.$$
Using an appropriate affine open cover of $B_0'$, consisting of 5 affine 2-spaces, we see that
$B_0$ and $B_0'$ are isomorphic as schemes.
\end{Example}

\begin{ConQuestions}
Let $T$ be a simple module, $P$ its projective cover, and $C \subseteq JP$ such that 
$\dim \mathrm{Aut}_\Lambda(P).C = 2$.
\vspace{1ex}
\begin{itemize}
\item[(1)] Is there a uniform upper bound (not depending on $k$ and $\mathrm{dim}_k C$) on the 
positive integers $n$ with the property that $\overline{\mathrm{Aut}_\Lambda(P).C}^{\dagger}$ has a 
relatively minimal model among $\mathbb{P}_k^2, X_0, X_2, \ldots, X_n$? 
\vspace{1ex}
\item[(2)] In Example \ref{ex:blowupsmooth}, the boundary of $\overline{\mathrm{Aut}_\Lambda(P).C}$ 
has three irreducible components; in all other examples, the number of irreducible components is 
$1$ or $2$.  Can more than three components be realized in $\overline{\mathrm{Aut}_\Lambda(P).C} 
- \mathrm{Aut}_\Lambda(P).C$ under our side conditions on $T$ and $C$?
\vspace{1ex}
\item[(3)]  How does the geometric structure of the surface $\overline{\mathrm{Aut}_\Lambda(P).C}$
(resp. of $\overline{\mathrm{Aut}_\Lambda(P).C}^{\dagger}$) pertain to degeneration-theoretic 
information about the module $P/C$?
\end{itemize}
\end{ConQuestions}

\bibliographystyle{plain}

\end{document}